\documentclass[10pt,reqno]{amsart}


\usepackage[pass]{geometry}
\newlength\DX
\DX=1in
\paperwidth=\dimexpr\paperwidth-\DX\relax
\hoffset=\dimexpr\hoffset-.5\DX\relax
\newlength\DY
\DY=1in
\paperheight=\dimexpr\paperheight-\DY\relax
\voffset=\dimexpr\voffset-.5\DY-.5\footskip\relax

\usepackage{mathtools}

\usepackage{lmodern}
\usepackage{mathabx}

\usepackage[T1]{fontenc}

\usepackage{amsmath,amsfonts,amsbsy,amsgen,amscd,mathrsfs,amssymb,amsthm}
\usepackage{enumerate}
\usepackage{bm}

\usepackage{stmaryrd}
\SetSymbolFont{stmry}{bold}{U}{stmry}{m}{n} 

\usepackage[usenames,dvipsnames]{xcolor}
\usepackage[colorlinks=true,citecolor=blue,linkcolor=blue]{hyperref}

\usepackage{tikz}


\newtheorem{thm}{Theorem}[section]
\newtheorem{lem}[thm]{Lemma}
\newtheorem{conj}[thm]{Conjecture}
\newtheorem{prop}[thm]{Proposition}

\theoremstyle{definition}

\theoremstyle{remark}

\newtheorem{example}[thm]{Example}
\newtheorem{rem}[thm]{Remark}
\newtheorem*{rem*}{Remark}

\numberwithin{equation}{section} 
\numberwithin{figure}{section}
\numberwithin{table}{section}

\makeatletter
\renewcommand\subsubsection{\@startsection{subsubsection}{3}%
  \z@{.5\linespacing\@plus.7\linespacing}{-.5em}%
  {\normalfont\bfseries}}
\makeatother


\newcommand{\Vol}{\mathrm{Vol}}
\newcommand{\V}{\mathsf{V}}

\newcommand{\MM}{\mathrm{M}}
\newcommand{\hh}{\mathrm{h}}

\begin{document}

\title[On a conjecture of Fedotov]{Shephard's inequalities, Hodge-Riemann 
relations, and a conjecture of Fedotov}

\author{Ramon van Handel}
\address{Fine Hall 207, Princeton University, Princeton, NJ 
08544, USA}

\begin{abstract}
A well-known family of determinantal inequalities for mixed volumes of 
convex bodies were derived by Shephard from the Alexandrov-Fenchel 
inequality. The classic monograph \emph{Geometric Inequalities} by Burago 
and Zalgaller states a conjecture on the validity of higher-order 
analogues of Shephard's inequalities, which is attributed to Fedotov. In 
this note we disprove Fedotov's conjecture by showing that it contradicts 
the Hodge-Riemann relations for simple convex polytopes. Along the way, we 
make some expository remarks on the linear algebraic and geometric aspects 
of these inequalities.
\end{abstract}

\subjclass[2010]{52A39; 
                 52A40} 

\keywords{Mixed volumes; Alexandrov-Fenchel inequality; Shephard's 
inequalities; Hodge-Riemann relations for convex polytopes}

\maketitle

\thispagestyle{empty}

\section{Introduction}
\label{sec:intro}

\subsection{}

Let $K_1,\ldots,K_m$ be convex bodies in $\mathbb{R}^n$ and 
$\lambda_1,\ldots,\lambda_m>0$. One of the most basic facts of convex 
geometry, due to H.\ Minkowski, is that the volume of convex bodies
is a homogeneous polynomial in the sense that
$$
	\Vol(\lambda_1K_1+\cdots+\lambda_m K_m)
	= \sum_{i_1,\ldots,i_n=1}^m \V(K_{i_1},\ldots,K_{i_n})\,
	\lambda_{i_1}\cdots\lambda_{i_n}.
$$
The coefficients $\V(K_1,\ldots,K_n)$, called mixed volumes, define 
a large family of natural geometric parameters of convex bodies, and play 
a central role in convex geometry \cite{BZ88,Sch14}. Mixed 
volumes are always nonnegative, are symmetric in their arguments, and are 
additive and homogeneous in each argument.

The fundamental inequality in the theory of mixed volumes is the
following.

\begin{thm}[Alexandrov-Fenchel] 
\label{thm:af}
For convex bodies
$K,L,C_1,\ldots,C_{n-2}$ in $\mathbb{R}^n$
$$
	\V(K,L,C_1,\ldots,C_{n-2})^2 \ge
	\V(K,K,C_1,\ldots,C_{n-2})\,
	\V(L,L,C_1,\ldots,C_{n-2}).
$$
\end{thm}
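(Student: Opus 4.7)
The plan is to recast the inequality as a signature statement for a symmetric bilinear form, and then establish that signature via a Hodge-Riemann type argument for simple polytopes. For fixed $C_1,\ldots,C_{n-2}$, define $B(K,L) := \V(K,L,C_1,\ldots,C_{n-2})$. After extending $K$ and $L$ to formal Minkowski differences of support functions, $B$ becomes a symmetric bilinear form on a real vector space, and the desired inequality is exactly the reverse Cauchy-Schwarz inequality for $B$. This is equivalent to $B$ having \emph{Lorentzian signature}, i.e.\ at most one positive eigenvalue on its ambient space; indeed, any $2$-dimensional restriction of such a form has nonpositive determinant, which is $B(K,L)^2 \ge B(K,K)\,B(L,L)$.

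The first step is a reduction to a combinatorial setting. Because mixed volumes are continuous in the Hausdorff metric, it suffices to prove the inequality when $K,L,C_1,\ldots,C_{n-2}$ are simple rational polytopes; moreover, by a slight perturbation we may arrange that each is compatible with a single complete simplicial fan $\Sigma$. In this setting, each polytope is encoded by a piecewise-linear support function on $\Sigma$, i.e.\ by its values on the rays of $\Sigma$, and each mixed volume becomes an explicit multilinear polynomial in these ray-value parameters.

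The heart of the argument is to invoke the Hodge-Riemann relation in degree $1$ for the graded Artinian ring $A^*(\Sigma) = \mathbb{R}[x_\rho : \rho \text{ a ray of }\Sigma]/(I_\Sigma + J)$, where $I_\Sigma$ is the Stanley-Reisner ideal and $J$ is generated by a linear system of parameters. This ring is graded with $A^n(\Sigma)\cong \mathbb{R}$, each polytope $P$ compatible with $\Sigma$ determines a class $\ell_P \in A^1(\Sigma)$, and one has $\V(P_1,\ldots,P_n) = \tfrac{1}{n!}\,\ell_{P_1}\cdots\ell_{P_n}$ under the degree-$n$ isomorphism. The Hodge-Riemann relation states that for any positive classes $\ell_{C_1},\ldots,\ell_{C_{n-2}}$, the symmetric form $(u,v)\mapsto u\cdot v\cdot \ell_{C_1}\cdots \ell_{C_{n-2}}$ on $A^1(\Sigma)$ has Lorentzian signature; applied to $u=\ell_K$, $v=\ell_L$, this is exactly Alexandrov-Fenchel.

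The main obstacle is proving the Hodge-Riemann statement itself. A standard route is to first verify the Lorentzian signature in a base case, e.g.\ when $C_1=\cdots=C_{n-2}=C$ is a single strongly convex body (which can be reduced to a spectral problem for a Laplace-type operator on the sphere, or to Alexandrov's mixed-discriminant inequality by induction on $n$), and then propagate the signature to arbitrary positive tuples $(\ell_{C_1},\ldots,\ell_{C_{n-2}})$ by a continuous deformation along which the bilinear form remains nondegenerate: the signature of a continuous family of nondegenerate symmetric forms is locally constant. Once the Hodge-Riemann statement is in hand for simple polytopes, the continuity of mixed volumes lifts the inequality back to arbitrary convex bodies.
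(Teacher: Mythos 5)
The paper does not prove Theorem \ref{thm:af}: it is quoted as a classical result, with references to Alexandrov's two proofs and to the Teissier--Khovanskii algebraic proof reproduced in \cite[\S 27]{BZ88}. Your proposal is therefore not competing with an argument in the text; it is an outline of the known Hodge-theoretic proof strategy. As an outline it identifies the right ingredients (reduction by continuity to simple strongly isomorphic polytopes, reformulation as a Lorentzian signature statement for the form $(u,v)\mapsto u\cdot v\cdot\ell_{C_1}\cdots\ell_{C_{n-2}}$ on $A^1(\Sigma)$, the degree-one Hodge--Riemann relation), but it has a genuine gap at exactly the step where all the difficulty of the theorem is concentrated. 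You propagate the signature from the base case $C_1=\cdots=C_{n-2}=C$ to general tuples ``by a continuous deformation along which the bilinear form remains nondegenerate,'' yet you give no reason why the form stays nondegenerate along the path. That nondegeneracy is a hard Lefschetz--type statement for mixed products of distinct ample classes; in every known proof it is obtained from an induction in which the Hodge--Riemann relations in dimension $n-1$ (for \emph{all} tuples) are used to establish nondegeneracy in dimension $n$, which then permits the deformation. Asserting it outright leaves the theorem unproven, and the base case you invoke (a single body $C$, via an elliptic operator on the sphere or mixed discriminants) does not by itself supply it.

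A smaller but real error: it is not true that every $2$-dimensional restriction of a Lorentzian form has nonpositive determinant --- a $2$-plane inside the negative cone gives a negative definite restriction with positive determinant. To conclude $\V(K,L,C_1,\ldots,C_{n-2})^2\ge \V(K,K,C_1,\ldots,C_{n-2})\,\V(L,L,C_1,\ldots,C_{n-2})$ you must also use that $\V(K,K,C_1,\ldots,C_{n-2})\ge 0$ (indeed $>0$ for full-dimensional bodies), so that the plane spanned by $\ell_K,\ell_L$ meets the positive cone; this positivity is exactly the role played by the Perron--Frobenius eigenvector in Lemma \ref{lem:hyp} of the paper.
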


Numerous inequalities in convex geometry may be derived from the
Alexandrov-Fenchel inequality, 
cf.\ \cite[\S 20]{BZ88} and \cite[\S 7.4]{Sch14}. The starting point for 
this note is a well-known family of determinantal inequalities, due to 
Shephard \cite{She60}, that extend the Alexandrov-Fenchel inequality to 
more than $n$ bodies.

\begin{thm}[Shephard]
\label{thm:shep}
Given convex bodies 
$K_1,\ldots,K_m,C_1,\ldots,C_{n-2}$ in $\mathbb{R}^n$, define the
$m\times m$ symmetric matrix $\MM$ by setting
$$
	\MM_{ij} := \V(K_i,K_j,C_1,\ldots,C_{n-2}).
$$
Then
$$
	(-1)^m\det\MM\le 0.
$$
\end{thm}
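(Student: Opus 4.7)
The plan is to deduce Shephard's inequality from the Alexandrov-Fenchel inequality (Theorem~\ref{thm:af}) by showing that the symmetric matrix $\MM$ has exactly one positive eigenvalue, which immediately forces the claimed sign of $\det\MM$.

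The linear-algebraic engine is the following observation. Suppose $A \in \mathbb{R}^{m \times m}$ is symmetric and there exists $v \in \mathbb{R}^m$ with $v^T A v > 0$ such that $(v^T A w)^2 \ge (v^T A v)(w^T A w)$ for every $w \in \mathbb{R}^m$. Then on the $A$-orthogonal hyperplane $W := \{w : v^T A w = 0\}$, which has codimension one, the hypothesis forces $w^T A w \le 0$, so $A$ is negative semidefinite on $W$. Combined with $v^T A v > 0$, this pins down the signature of $A$: exactly one positive and $m-1$ non-positive eigenvalues. Hence $\det A$ has sign $(-1)^{m-1}$ (or vanishes), yielding $(-1)^m \det A \le 0$.

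To apply this to $A = \MM$, I would first perturb $K_i \mapsto K_i + \epsilon B$ (with $B$ a Euclidean unit ball) so that each $K_i$ has nonempty interior and $\MM_{11} = \V(K_1, K_1, C_1, \ldots, C_{n-2}) > 0$; the theorem then follows by letting $\epsilon \downarrow 0$. Taking $v = e_1$ and $L_w := \sum_j w_j K_j$ interpreted as a formal combination (with $\V$ extended multilinearly to such combinations), the hypothesis to verify becomes the ``hyperbolic'' form of Alexandrov-Fenchel,
\[
\V(K_1, L_w, C_1,\ldots,C_{n-2})^2 \ge \V(K_1, K_1, C_1,\ldots,C_{n-2}) \, \V(L_w, L_w, C_1,\ldots,C_{n-2}),
\]
for every $w \in \mathbb{R}^m$. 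When $w$ has non-negative entries, $L_w$ is a genuine convex body and this is just Theorem~\ref{thm:af}.

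The main obstacle is extending this inequality to arbitrary $w \in \mathbb{R}^m$, where $L_w$ is only a virtual convex body. I would handle this by a standard absorption trick: because $K_1$ contains a ball, the support function $h_{L_w} + t\, h_{K_1}$ is sublinear for $t > 0$ sufficiently large, so $L_w + tK_1$ is a genuine convex body. Applying Theorem~\ref{thm:af} to the actual convex bodies $K_1$ and $L_w + tK_1$ and expanding both sides using multilinearity of $\V$ in $t$, the coefficients of $t^2$ and $t$ cancel identically on both sides (using symmetry of $\V$), leaving precisely the displayed hyperbolic inequality as the $t$-free residue. This completes the verification of the hypothesis, and hence the proof.
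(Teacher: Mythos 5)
Your skeleton is the same as the paper's: reduce the determinant bound to the statement that $\MM$ has exactly one positive eigenvalue, establish this by showing $\MM$ is negative semidefinite on the $\MM$-orthogonal hyperplane of a distinguished vector, and obtain the needed quadratic inequality for ``virtual'' combinations $L_w$ by an absorption-and-cancellation trick (this is Lemma~\ref{lem:hyp} in the paper). But your absorption step has a genuine gap. You claim that since $K_1$ contains a ball, $h_{L_w}+t\,h_{K_1}$ is sublinear for $t$ large, so that $L_w+tK_1$ is an honest convex body. This is false in general: $L_w+tK_1=(w_1+t)K_1+\sum_{j\ge 2}w_jK_j$ still carries the negative coefficients $w_j$ for $j\ge 2$, and adding a large ball does not absorb a polytopal negative part. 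Concretely, in $\mathbb{R}^2$ take $K_1=B$ (the unit disk) and $K_2=Q+\varepsilon B$ with $Q$ a square, and $w=(0,-1)$: then $h_{L_w}+t\,h_{K_1}=(t-\varepsilon)h_B-h_Q$ is a support function if and only if $Q$ is a summand of a disk, which it never is (every summand of a Euclidean disk is a disk or a point; equivalently, the surface-area measure of a disk is absolutely continuous while that of $Q$ is atomic). Since the $w$'s you must handle lie on the hyperplane $W=\{w:\V(K_1,L_w,C_1,\ldots,C_{n-2})=0\}$ and $\MM$ is a positive matrix, these $w$ necessarily have entries of both signs, so this is not a degenerate configuration you can avoid; nor is the cone of admissible $w$ dense, so continuity does not rescue you.

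The repair is small, and it is exactly what the paper does in the proof of \emph{2}$\Rightarrow$\emph{3} of Lemma~\ref{lem:hyp}: the vector you add must be strictly positive in \emph{every} coordinate, not just the first. Replace $v=e_1$ by $v=(1,\ldots,1)$, i.e., work with $S:=\sum_jK_j$ in place of $K_1$. Then $w+tv\ge 0$ for $t$ large, so $L_{w+tv}$ is a genuine nonnegative Minkowski combination and Theorem~\ref{thm:af} applies directly to the pair $(S,\,L_{w+tv})$. Your cancellation computation goes through verbatim, since in both versions one adds a multiple of $v$ to $w$ inside the pair $(v,\,w+tv)$; the $t$ and $t^2$ terms cancel and leave $\V(S,L_w,C_1,\ldots,C_{n-2})^2\ge\V(S,S,C_1,\ldots,C_{n-2})\,\V(L_w,L_w,C_1,\ldots,C_{n-2})$ for arbitrary $w$. (A minor further point: to guarantee the strict positivity $\langle v,\MM v\rangle>0$ you should perturb the $C_i$ by $\varepsilon B$ as well as the $K_i$, as the paper does; perturbing the $K_i$ alone does not suffice if the $C_i$ are lower-dimensional.)
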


The special case $m=2$ of Theorem \ref{thm:shep} is just a reformulation 
of the Alexandrov-Fenchel inequality, and Shephard's inequalities may thus 
be viewed as a considerable generalization of the Alexandrov-Fenchel 
inequality. However, as is shown by Shephard (and as we will explain later 
in this note), the general inequalities may in fact be deduced from the 
$m=2$ case by a simple linear algebraic argument. In the case $m=3$, this 
result dates back already to Minkowski \cite[p.\ 478]{Min03}.

\subsection{}

The classic monograph \emph{Geometric Inequalities} by Burago and 
Zalgaller states a conjecture on the validity of a higher-order 
generalization of Theorem \ref{thm:shep}, which is attributed to Fedotov 
\cite[\S 20.6]{BZ88}. Let us recall the statement of this conjecture.
In the sequel, we will frequently employ the notation
$$
	\V(K_1[m_1],K_2[m_2],\ldots,K_r[m_r]) :=
	\V(
	\underbrace{K_1,\ldots,K_1}_{m_1},
	\underbrace{K_2,\ldots,K_2}_{m_2},
	\ldots,
	\underbrace{K_r,\ldots,K_r}_{m_r})
$$
when convex bodies are repeated multiple times
in the arguments of a mixed volume.

\begin{conj}[Fedotov]
\label{conj:fed}
Let $k\le n/2$, and let
$K_1,\ldots,K_m,C_1,\ldots,C_{n-2k}$ be convex bodies in $\mathbb{R}^n$.
Define the
$m\times m$ symmetric matrix $\MM$ by setting
$$
	\MM_{ij} := \V(
	K_i[k],K_j[k],C_1,\ldots,C_{n-2k}).
$$
Then
$$
	(-1)^m\det\MM\le 0.
$$
\end{conj}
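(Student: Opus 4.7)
Since the abstract announces that this conjecture is false, my plan is to outline a disproof and first identify the point where the proof of Shephard's inequality (the $k=1$ case of the conjecture) fails to extend. For $k=1$, the standard approach is to work in the degree-$1$ part $A^1$ of a polytope algebra (or Chow ring of the associated toric variety) of a simple polytope that has all $K_i$ as Minkowski summands, and to recognize $\MM$ as the Gram matrix of $K_1,\ldots,K_m\in A^1$ with respect to the bilinear form
$$
	q(a,b):=\V(a,b,C_1,\ldots,C_{n-2}).
$$
The Alexandrov--Fenchel inequality says precisely that $q$ has Lorentzian signature $(1,\dim A^1-1)$, and a standard linear-algebraic lemma then gives $(-1)^m\det\MM\le 0$ for any Gram matrix in a Lorentzian form. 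This is the Shephard argument.

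For $k\ge 2$ one still has $\MM_{ij}=q(K_i^k,K_j^k)$, but now $q(a,b):=\V(a,b,C_1,\ldots,C_{n-2k})$ is a bilinear form on $A^k$ and $K_i^k$ denotes the $k$-fold product in the polytope algebra. Two things change. First, $K\mapsto K^k$ is non-linear, so $\{K_i^k\}$ is constrained to a ``Veronese'' locus of $A^k$. More importantly, the Hodge--Riemann relations for simple polytopes determine the signature of $q$ on $A^k$ via the Lefschetz decomposition $A^k=\bigoplus_{j=0}^{k}\omega^{k-j}P^j$, with $q$ being $(-1)^j$-definite on the $j$-th summand. For $k\ge 2$ this signature is \emph{non-Lorentzian}: it has at least $1+\dim P^2\ge 2$ positive directions whenever the polytope has nontrivial primitive part in degree $2$. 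Since the map $K\mapsto K^k$ has full-rank derivative at an interior point of the nef cone, the span of $\{K_i^k\}$ is generically all of $A^k$, so the Gram matrix $\MM$ can inherit more than one positive eigenvalue, which is enough to violate $(-1)^m\det\MM\le 0$ for suitable $m$.

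The disproof then reduces to an explicit computation in the smallest case, $n=4$ and $k=2$ (no $C_\ell$ appear). I would take a simple $4$-polytope with $h^2\ge 2$ --- for example, a product $\Delta^2\times\Delta^2$ of two triangles or a suitably perturbed $4$-simplex --- and choose $m$ convex bodies $K_i$ whose classes span $A^1$ generically, then compute $\det\MM$ by a direct evaluation of mixed volumes. The main obstacle I anticipate is the combinatorial bookkeeping needed to identify the Lefschetz decomposition of $A^2$ for the chosen polytope and to pick $K_i$ whose squares genuinely engage the extra positive direction inside $P^2$; once the right polytope is in hand, the sign of $\det\MM$ should follow from a finite, tractable computation, exactly along the Hodge--Riemann route foreshadowed in the abstract.
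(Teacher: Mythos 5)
Your overall strategy --- that the Hodge--Riemann relations force the intersection form $q(a,b)=\V(a,b,C_1,\ldots,C_{n-2k})$ on the degree-$k$ part $A^k$ of the polytope algebra to be non-Lorentzian once $h_2>h_1$, and that this is incompatible with the conjecture --- is exactly the mechanism the paper exploits. But as written the argument has three genuine gaps. First, ``$\MM$ has more than one positive eigenvalue'' does not by itself violate $(-1)^m\det\MM\le 0$: if $\MM$ has, say, three positive and $m-3$ negative eigenvalues, then $(-1)^m\det\MM<0$ and the conjectured inequality holds for the full matrix. One must pass to a principal submatrix, i.e.\ to a sub-collection of the bodies; this is the content of the hyperbolic Sylvester criterion (Lemma~\ref{lem:syl}), which shows that the conjecture, applied to all sub-collections, is equivalent to hyperbolicity of $\MM$, and it is hyperbolicity that the Hodge--Riemann relations contradict (via condition \emph{3} of Lemma~\ref{lem:hyp}). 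Second, your justification that the classes $K_i^k$ can be chosen to span $A^k$ is incorrect as stated: the derivative of $K\mapsto K^k$ at $L$ is $M\mapsto kL^{k-1}M$, whose image $L^{k-1}A^1$ has dimension $\dim A^1=h_1$, strictly smaller than $\dim A^k=h_k$ in precisely the cases of interest; the map is nowhere submersive onto $A^k$. The true statement --- that the \emph{linear span} of the whole Veronese image $\{K^k\}$ is all of $A^k$ --- is what is needed, and it requires the polarization identity (Lemma~\ref{lem:pol}), not a derivative computation.

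Third, and most importantly, the proof is not finished: it defers the existence of a configuration actually realizing a second positive direction to an explicit mixed-volume computation for a concrete $4$-polytope, which you do not carry out and which you yourself flag as the main obstacle. The paper shows no such computation is needed: nontriviality of the Hodge--Riemann relations follows abstractly from $\dim(P_k/I)=h_k-h_{k-1}>0$ for the cube together with the equality characterization in Theorem~\ref{thm:timorin}, which produces $x$ and $K_1,\ldots,K_m$ satisfying \eqref{eq:primitive} with strict inequality in \eqref{eq:hr}; combined with Lemmas~\ref{lem:hyp} and~\ref{lem:syl} this already yields the contradiction. One further remark: if your signature-counting argument were completed it would treat all $k\ge 2$ uniformly, whereas the paper's direct argument works only for even $k$ (for odd $k$ the Hodge--Riemann inequality and hyperbolicity point in the same direction) and odd $k$ is handled by a separate polarization reduction to $k=2$. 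That uniformity would be a genuine advantage, but it rests on the spanning step that is currently unjustified.
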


If true, this conjecture would entail a considerable generalization of 
Shephard's inequalities. The conjecture is rather appealing, as it is 
easily verified to be true in two extreme cases that have a different 
flavor.

\begin{lem}
\label{lem:fedeasy}
Conjecture \ref{conj:fed} is valid in the following two cases:
\begin{enumerate}[a.]
\item When $k=1$ and $m$ is arbitrary.
\item When $m=2$ and $k$ is arbitrary.
\end{enumerate}
\end{lem}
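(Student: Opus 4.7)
The plan is to treat the two cases essentially independently, since (a) is an immediate reduction to Theorem \ref{thm:shep} while (b) is an instance of a standard iterated form of Alexandrov--Fenchel.

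For case (a), I would simply observe that when $k=1$ the matrix $\MM_{ij} = \V(K_i,K_j,C_1,\ldots,C_{n-2})$ is literally the matrix appearing in Theorem \ref{thm:shep}, so the conjectured inequality $(-1)^m\det\MM\le 0$ is precisely Shephard's inequality. No additional work is needed.

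For case (b), since $\MM$ is a $2\times 2$ symmetric matrix, the statement $(-1)^2\det\MM\le 0$ is equivalent to
\[
	\V(K_1[k],K_2[k],C_1,\ldots,C_{n-2k})^2 \ge
	\V(K_1[2k],C_1,\ldots,C_{n-2k})\,
	\V(K_2[2k],C_1,\ldots,C_{n-2k}).
\]
I would prove this via the well-known log-concavity consequence of Theorem \ref{thm:af}. Define
\[
	v_j := \V(K_1[j],K_2[2k-j],C_1,\ldots,C_{n-2k}),\qquad 0\le j\le 2k.
\]
Applying Alexandrov--Fenchel with $K=K_1$, $L=K_2$, and the remaining slot filled by $K_1[j-1],K_2[2k-j-1],C_1,\ldots,C_{n-2k}$ (a total of $n-2$ bodies) gives $v_j^2\ge v_{j-1}v_{j+1}$ for $1\le j\le 2k-1$. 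Thus $j\mapsto \log v_j$ is concave on $\{0,\ldots,2k\}$, whence $v_k^2\ge v_0 v_{2k}$, which is exactly the desired inequality.

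There is no substantial obstacle here: part (a) is a tautology, and part (b) is the standard consequence of Alexandrov--Fenchel obtained by iterating the basic two-term inequality along the sequence of mixed volumes that interpolate between $v_0$ and $v_{2k}$. The only mild subtlety worth flagging is verifying that the counts of arguments in the repeated application of Theorem \ref{thm:af} are correct, namely that $(j-1)+(2k-j-1)+(n-2k) = n-2$ so that AF applies in $\mathbb{R}^n$, which is immediate.
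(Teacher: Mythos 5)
Your proposal is correct and follows essentially the same route as the paper: case (a) is the identical observation that $k=1$ reduces to Theorem \ref{thm:shep}, and case (b) rests on the same iterated Alexandrov--Fenchel inequality, which the paper simply cites as \cite[(7.63)]{Sch14} (with $k=l$) while you rederive it via the log-concavity of $j\mapsto \V(K_1[j],K_2[2k-j],C_1,\ldots,C_{n-2k})$. The only point worth flagging in your self-contained version is that the ``$\log$-concavity'' phrasing tacitly assumes the $v_j$ are positive; this is harmless (perturb all bodies by $\varepsilon B$ and let $\varepsilon\to 0$, as the paper itself does elsewhere), but should be noted.
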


\begin{proof}
Case $a$ is nothing other than Theorem \ref{thm:shep}. To prove $b$, 
it suffices to note that iterating the Alexandrov-Fenchel inequality
yields \cite[(7.63)]{Sch14}
\begin{multline*}
	\V(K_1[k],K_2[l],C_1,\ldots,C_{n-k-l})^{k+l}
	\ge \\
	\V(K_1[k+l],C_1,\ldots,C_{n-k-l})^k\,
	\V(K_2[k+l],C_1,\ldots,C_{n-k-l})^l
\end{multline*}
for any $k,l\ge 1$, $k+l\le n$. The case $k=l$ is readily seen to be 
equivalent to $b$.
\end{proof}

The main purpose of this note is to explain that Conjecture \ref{conj:fed} 
fails when one goes beyond the special cases of Lemma 
\ref{lem:fedeasy}. More precisely, we will prove:

\begin{thm}
\label{thm:main}
For every $k>1$, 
Conjecture \ref{conj:fed} is false for some $m>2$.
\end{thm}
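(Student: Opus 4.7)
The plan is to construct, for each $k > 1$, an explicit triple of convex polytopes $K_1, K_2, K_3$ in $\mathbb{R}^{2k}$ such that the $3 \times 3$ matrix $\MM_{ij} = \V(K_i[k], K_j[k])$ has $\det \MM < 0$; since $(-1)^3 \det \MM > 0$, this refutes Conjecture~\ref{conj:fed} already in the case $m = 3$, $n = 2k$, with no auxiliary bodies.

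The first step reduces the problem to linear algebra on a Chow ring. Restricting to bodies sharing a common normal fan, each $K_i$ corresponds to an ample class $h_i \in A^1(X_P)$ for the associated simple polytope $P$, and up to a positive normalization depending only on $n,k$ one has $\V(K_i[k], K_j[k]) = \deg(h_i^k \cdot h_j^k)$. Thus $\MM$ is, up to a positive factor, the Gram matrix of $h_1^k, h_2^k, h_3^k$ under the bilinear pairing $B(x, y) := \deg(xy)$ on $A^k(X_P)$, and the sign of $\det \MM$ matches that of this Gram matrix.

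The conceptual input is that the Hodge-Riemann relations for simple polytopes force $B$ on $A^1$ to have signature $(1, \dim A^1 - 1)$, which is equivalent to the Alexandrov-Fenchel inequality and precludes Gram matrices of ``wrong'' sign; whereas for $k \ge 2$ the pairing $B$ on $A^k$ can have many positive eigenvalues, and in particular admits $3$-dimensional subspaces of signature $(2, 1)$ on which any Gram matrix has negative determinant. This motivates the choice $X_P = \mathbb{P}^k \times \mathbb{P}^k$, i.e., $P = \Delta^k \times \Delta^k$, with $A^*(X_P) = \mathbb{Q}[t_1, t_2]/(t_1^{k+1}, t_2^{k+1})$, as the minimal instance exhibiting this phenomenon.

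For the verification, take $K_i = \alpha_i \Delta^k \times \beta_i \Delta^k$ corresponding to $h_i = \alpha_i t_1 + \beta_i t_2$, and specialize to $(\alpha_i, \beta_i) = (1, \varepsilon), (\varepsilon, 1), (1, 1)$ for small $\varepsilon > 0$. Using $h_i^k = \sum_{p=0}^k \binom{k}{p} \alpha_i^p \beta_i^{k-p} t_1^p t_2^{k-p}$ together with $\deg(t_1^p t_2^q) = \delta_{p, k} \delta_{q, k}$, at $\varepsilon = 0$ the Gram matrix becomes
\[
\begin{pmatrix} 0 & 1 & 1 \\ 1 & 0 & 1 \\ 1 & 1 & \binom{2k}{k} \end{pmatrix},
\]
whose determinant equals $2 - \binom{2k}{k}$, strictly negative for all $k \ge 2$. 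By continuity of mixed volumes, $\det \MM < 0$ persists for small $\varepsilon > 0$, yielding full-dimensional convex bodies violating Conjecture~\ref{conj:fed}. The only real bookkeeping obstacle is tracking the positive normalization constant relating mixed volumes to intersection numbers on $X_P$, which however affects only an overall positive factor and therefore not the sign of $\det \MM$.
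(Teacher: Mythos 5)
Your proof is correct, and it takes a genuinely different route from the paper. The paper argues non-constructively: it shows (via the hyperbolic Sylvester criterion, Lemma \ref{lem:syl}) that Conjecture \ref{conj:fed} is equivalent to hyperbolicity of $\MM$, derives a contradiction with the degree-$2$ Hodge--Riemann relations whose nontriviality is certified abstractly by the $h$-vector of the cube, and for $k>2$ reduces to $k=2$ by a polarization trick; the output is a violation of \emph{some} principal minor of \emph{some} matrix, for an unspecified $m>2$. You instead exhibit an explicit $3\times 3$ counterexample in $\mathbb{R}^{2k}$, which is both sharper (it pins down $m=3$ and $n=2k$ with no auxiliary bodies) and, once stripped of the toric language, entirely elementary: writing $A=\Delta^k\times\{0\}$, $B=\{0\}\times\Delta^k$ and $K_i=\alpha_iA+\beta_iB$, the identity $\Vol(\alpha A+\beta B)=\alpha^k\beta^k/(k!)^2$ gives $\V(A[p],B[2k-p])=\delta_{p,k}/(2k)!$, whence $\V(K_i[k],K_j[k])=\frac{1}{(2k)!}\sum_{p}\binom{k}{p}\binom{k}{k-p}\alpha_i^p\beta_i^{k-p}\alpha_j^{k-p}\beta_j^{p}$ --- exactly your Gram pairing with the normalization constant $1/(2k)!$ made explicit, so no appeal to the Chow ring of $\mathbb{P}^k\times\mathbb{P}^k$ is needed. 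Your matrix entries at $\varepsilon=0$ check out, $\det=2-\binom{2k}{k}<0$ for $k\ge 2$ (and $=0$ for $k=1$, consistent with Shephard), and since mixed volumes are defined for lower-dimensional compact convex sets you could even take $\varepsilon=0$ outright; the continuity argument for small $\varepsilon>0$ is in any case immediate because the entries are polynomials in $\varepsilon$. What the paper's approach buys in exchange for its abstraction is the conceptual explanation (the link between Fedotov's conjecture, hyperbolicity, and the sign in the Hodge--Riemann relations); what yours buys is a concrete, checkable witness.
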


\subsection{}

In order to explain how we will disprove Conjecture \ref{conj:fed}, it is 
useful to first briefly recall some of its history. 

Despite the fundamental nature of the Alexandrov-Fenchel inequality, no 
really elementary proof of it is known. Alexandrov gave two different (but 
closely related) proofs in the 1930s: a combinatorial proof using strongly 
isomorphic polytopes \cite{Ale37}, and an analytic proof using elliptic 
operators \cite{Ale38}. Further remarks on its history and on more 
modern proofs may be found in \cite{Sch14,SvH18}.

In the 1970s, unexpected connections were discovered between the theory of 
mixed volumes and algebraic geometry. In particular, a remarkable identity 
due to Bernstein and Kushnirenko \cite[Theorem 27.1.2]{BZ88} shows that 
the number of solutions $z\in(\mathbb{C}\backslash\{0\})^n$ of a generic 
system of polynomial equations $p_1(z)=0,\ldots,p_n(z)=0$ with given 
monomials coincides with the mixed volume of an associated family of 
lattice polytopes in $\mathbb{R}^n$ (i.e., polytopes with vertices in 
$\mathbb{Z}^n$).

Motivated by these developments, Fedotov \cite{Fed79} proposed a simple 
proof of the Alexandrov-Fenchel inequality using only basic properties of 
polynomials. Fedotov further notes that his method even yields the more 
general Conjecture \ref{conj:fed}, which is stated in \cite{Fed79} as a 
theorem. These results were included in the Russian edition of the 
monograph of Burago and Zalgaller. Unfortunately, Fedotov's elementary 
approach turns out to contain a serious flaw, which renders his method of 
proof invalid. A correct algebraic proof of the Alexandrov-Fenchel 
inequality was given by Teissier and Khovanskii using nontrivial 
machinery, namely a reduction to the Hodge index theorem of algebraic 
geometry. The latter proof is included in the English translation of 
Burago-Zalgaller \cite[\S 27]{BZ88}, but does not settle the validity of 
Fedotov's higher-order analogue of Shephard's inequalities \cite[\S 
20.6]{BZ88}.

On the other hand, the algebraic connection yields other higher-order 
inequalities. The Alexandrov-Fenchel inequality is analogous to a 
Hodge-Riemann relation of degree $1$ in the cohomology ring of a smooth 
projective variety \cite{DN06,Huh18}. Hodge-Riemann relations of higher 
degree give rise to new inequalities in convex geometry. Such inequalities 
were first stated by McMullen \cite{McM93} for strongly isomorphic simple 
polytopes as a byproduct of his work on the $g$-conjecture. Their 
geometric significance was greatly clarified by Timorin \cite{Tim99}, 
whose formulation is readily interpreted in terms of explicit inequalities 
for mixed volumes. Very recently, some special cases were extended also to 
smooth convex bodies in \cite{Kot20,Ale20,KW22}.

The proof of Theorem \ref{thm:main} may now be explained as follows. Using 
the properties of hyperbolic quadratic forms, we will first reformulate 
Conjecture \ref{conj:fed} as a higher-order Alexandrov-Fenchel inequality. 
In this equivalent formulation, it will be evident that this inequality 
contradicts the Hodge-Riemann relation of degree $2$. Thus the results of 
McMullen and Timorin imply that Conjecture \ref{conj:fed} is false. Beside 
disproving the conjecture, a more expository aim of this note is to draw 
attention to some basic linear algebraic and geometric aspects of the 
above inequalities (none of which are really new here) in the context of 
classical convexity.

\begin{rem*}
It should be noted that Fedotov's conjecture as stated in 
\cite[\S 20.6]{BZ88} is somewhat more general than Conjecture 
\ref{conj:fed}: the matrix $\MM$ considered there is
$$
	\MM_{ij} := \V(
	K_i[k],K_j[l],C_1,\ldots,C_{n-k-l})
$$
for any $k,l\ge 1$ such that $k+l\le n$. Lemma 
\ref{lem:fedeasy} extends to this setting: the case $k=l=1$ and 
general $m$ reduces to Shephard's inequalities, while the case $m=2$ and 
general $k,l$ is obtained by multiplying the inequality 
\cite[(7.63)]{Sch14} used in the proof of Lemma \ref{lem:fedeasy} by the 
same inequality with the roles of $k,l$ reversed.
When $k\ne l$, however, the matrix $\MM$ is not symmetric, and the 
spectral interpretation of the conjecture becomes unclear. Given that we 
show the conjecture fails for general $m$ already in the symmetric case 
$k=l$, it seems implausible that the nonsymmetric case $k\ne l$ has any 
merit, and we do not consider it further in this note.
\end{rem*}

\subsection{}

The remainder of this note is organized as follows. In section 
\ref{sec:lin}, we recall some basic properties of hyperbolic quadratic 
forms that will be used in the sequel. We also briefly discuss Shephard's 
inequalities and clarify their equality cases. In section~\ref{sec:hodge} 
we formulate the Hodge-Riemann relations for strongly isomorphic simple 
polytopes, due to McMullen and Timorin, entirely in the language of 
classical convexity. Finally, section \ref{sec:main} completes the proof 
of Theorem \ref{thm:main}.

While the proof of Theorem \ref{thm:main} explains clearly \emph{why} 
Fedotov's conjecture must fail, the construction is rather indirect. Once 
the proof has been understood, however, it is not difficult to engineer
an explicit counterexample, which will be done in section 
\ref{sec:explicit}. Beside further illustrating the basic construction, 
this example will show that we may in fact choose $m=3$ in Theorem 
\ref{thm:main}.

We conclude this note by highlighting a puzzling aspect of the 
Hodge-Riemann relations: even though their statement makes sense in 
principle for arbitrary convex bodies, the Hodge-Riemann relations have 
only been proved for special classes of bodies (e.g., strongly isomorphic 
simple polytopes). In section \ref{sec:hrgeneral}, we will illustrate by 
means of a simple example that the Hodge-Riemann relations may fail for 
general convex bodies. This highlights the rather unusual nature of the 
Hodge-Riemann relations as compared to other inequalities in convex 
geometry.

\section{Linear algebra}
\label{sec:lin}

The aim of this section is to explain that the connection between the 
Alexandrov-Fenchel and Shephard inequalities has nothing to do with 
convexity, but is rather a simple linear-algebraic fact. The results of 
this section are known in various forms, see, e.g., \cite[Theorem 
4.4.6]{BR97}, \cite[Lemma 2.9]{SvH18}, or \cite[Lemma 3.1]{SvH19}, but we 
provide simple self-contained proofs for the variants needed here.

\subsection{Hyperbolic matrices}

We begin by giving a spectral interpretation of the Alexandrov-Fenchel 
inequality. In the sequel, a matrix $\MM$ will be called positive if 
$\MM_{ij}>0$ for all $i,j$. For $y\in\mathbb{R}^m$, we write $y\ge 0$ 
($y>0$) if $y_i\ge 0$ ($y_i>0$) for all $i$. The linear span of all 
eigenvectors of a symmetric matrix $\MM$ with positive eigenvalues will be 
called the positive eigenspace of $\MM$.

\begin{lem}
\label{lem:hyp}
Let $\MM$ be a symmetric positive matrix.
The following are equivalent:
\begin{enumerate}[1.]
\item The positive eigenspace of $\MM$ is one-dimensional.
\item $\langle x,\MM y\rangle^2 \ge \langle x,\MM x\rangle\,
\langle y,\MM y\rangle$ for all $x\ge 0$ and $y\ge0$.
\item $\langle x,\MM y\rangle=0$ implies $\langle x,\MM x\rangle\le 0$
for all $x$ and $y\ge 0$, $y\ne 0$.
\end{enumerate}
\end{lem}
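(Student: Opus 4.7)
The plan is to prove $(1)\Rightarrow(3)\Rightarrow(2)\Rightarrow(1)$. Throughout, the one input from matrix theory that I would invoke is the Perron--Frobenius theorem: since $\MM$ is symmetric with strictly positive entries, its largest eigenvalue $\lambda_1$ is simple, positive, and admits a strictly positive eigenvector $v_1>0$. In particular $\langle y,\MM y\rangle>0$ for every nonzero $y\ge 0$, since $\MM y>0$ componentwise.

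For $(1)\Rightarrow(3)$, I would argue by a standard signature argument. Assume the positive eigenspace is one-dimensional, so $\langle\cdot,\MM\cdot\rangle$ has signature $(1,p,q)$ with a single positive direction. Fix $y\ge 0$, $y\ne 0$; by the above $\langle y,\MM y\rangle>0$, so $\sspan(y)$ is a positive line for the form. If $\langle x,\MM y\rangle=0$, then $x$ lies in the $\MM$-orthogonal complement of this line; but this complement contains no further positive directions, hence $\langle x,\MM x\rangle\le 0$.

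For $(3)\Rightarrow(2)$, I would use the usual Cauchy--Schwarz trick. Given $x,y\ge 0$ with $y\ne 0$, set
$$
z := \langle y,\MM y\rangle\,x - \langle x,\MM y\rangle\,y.
$$
Then $\langle z,\MM y\rangle=0$ by direct computation, so $(3)$ gives $\langle z,\MM z\rangle\le 0$. Expanding this inner product and dividing by $\langle y,\MM y\rangle>0$ produces exactly the inequality in $(2)$. The degenerate case $y=0$ is trivial.

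The interesting direction is $(2)\Rightarrow(1)$, and this is where the Perron eigenvector plays a decisive role. Suppose for contradiction that $\MM$ has a second positive eigenvalue $\lambda_2>0$ with eigenvector $v_2$. Then $v_2\perp v_1$ in the standard inner product, and since $v_1>0$ componentwise, $v_2$ must have both positive and negative entries. Consequently $x:=v_1+cv_2\ge 0$ for all sufficiently small $|c|>0$. Applying $(2)$ to this $x$ and to $y:=v_1$ and using orthogonality of $v_1,v_2$, the inequality collapses to
$$
\lambda_1\|v_1\|^2\ \ge\ \lambda_1\|v_1\|^2+c^2\lambda_2\|v_2\|^2,
$$
which forces $c^2\lambda_2\|v_2\|^2\le 0$, contradicting $\lambda_2>0$ and $c\ne 0$. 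The main obstacle is really just identifying the right test pair $(x,y)$ in this last step; once the Perron vector is used as $y$ and the perturbation $v_1+cv_2$ as $x$, the rest is a one-line calculation.
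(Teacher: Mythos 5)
Your proof is correct, and it closes the equivalence cycle in the opposite direction from the paper: you prove $1\Rightarrow 3\Rightarrow 2\Rightarrow 1$, whereas the paper proves $3\Rightarrow 1$, $1\Rightarrow 2$, $2\Rightarrow 3$, so each of your implications is the converse of one of the paper's and the individual arguments genuinely differ. For $1\Rightarrow 3$ you invoke the inertia fact that a form of positive index one is negative semidefinite on the $\MM$-orthogonal complement of any $y$ with $\langle y,\MM y\rangle>0$; this is standard but deserves one more line (if also $\langle x,\MM x\rangle>0$ with $\langle x,\MM y\rangle=0$, then $\sspan\{x,y\}$ is a two-dimensional subspace on which the form is positive definite, contradicting Sylvester's law of inertia). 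The paper instead works with the complement $v^\perp$ of the Perron vector in the \emph{standard} inner product, where negative semidefiniteness is immediate from the spectral decomposition. Your $3\Rightarrow 2$ is the hyperbolic Cauchy--Schwarz computation with $z=\langle y,\MM y\rangle x-\langle x,\MM y\rangle y$, essentially the paper's $1\Rightarrow 2$ decomposition run in reverse. The most substantive divergence is at the closing step: the paper's $2\Rightarrow 3$ requires the trick of extending inequality 2 to arbitrary $x$ by adding $by$ for large $b$ and observing that all $b$-terms cancel, followed by an $\varepsilon$-perturbation to handle $y\ge 0$; your $2\Rightarrow 1$ avoids all of this by testing 2 directly on $x=v_1+cv_2$, $y=v_1$, which is a clean and arguably more economical way to see that a second positive eigenvalue is incompatible with 2. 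Both routes rest on the same external input, namely Perron--Frobenius providing a strictly positive top eigenvector; note only that $\langle y,\MM y\rangle>0$ for $0\ne y\ge 0$ already follows from positivity of the entries of $\MM$ and does not need Perron--Frobenius.
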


\begin{proof} 
As $\MM$ is a positive matrix, the Perron-Frobenius theorem implies that 
it has at least one eigenvector $v>0$ with positive eigenvalue.

\medskip

\emph{3}$\Rightarrow$\emph{1}: Let $x\perp v$ be any other eigenvector of 
$\MM$. Then $\langle x,\MM v\rangle = 0$, so \emph{3} implies $\langle 
x,\MM x\rangle\le 0$. Thus the eigenvalue associated to $x$ must be 
nonpositive.

\medskip

\emph{1}$\Rightarrow$\emph{2}: 
It follows from \emph{1} that 
$\MM$ is negative semidefinite on $v^\perp$. Fix $x,y\ge 0$; we may assume 
$y\ne 0$ (else the inequality is trivial), so that $\langle 
y,v\rangle>0$ and $\langle y,\MM y\rangle>0$. If we define $z=x-ay$ with 
$a=\langle x,v\rangle/\langle y,v\rangle$, then $z\in v^\perp$, so
$$
	0\ge \langle z,\MM z\rangle
	= \langle x,\MM x\rangle 
	-2a\langle x,\MM y\rangle + a^2\langle y,\MM y\rangle
	\ge
	\langle x,\MM x\rangle-\frac{\langle x,\MM y\rangle^2}{
	\langle y,\MM y\rangle}.
$$

\medskip

\emph{2}$\Rightarrow$\emph{3}:
We first show that \emph{2} remains valid for any $x$ (not just $x\ge 0$).
Suppose first that $y>0$. Then $x+by\ge 0$ when $b$ 
is chosen sufficiently large, so \emph{2} implies
$$
	\langle x+by,\MM y\rangle^2 \ge
	\langle x+by,\MM (x+by)\rangle \,\langle y,\MM y\rangle.
$$
Expanding both sides of this inequality shows that all terms involving $b$ 
cancel, so
$
	\langle x,\MM y\rangle^2 \ge \langle x,\MM x\rangle\,
	\langle y,\MM y\rangle
$
for any $x$ and $y>0$. This conclusion 
remains valid for any $y\ge 0$ by applying the above argument with 
$y\leftarrow y+\varepsilon v$ and letting $\varepsilon\to 0$. Now
\emph{3} follows immediately once we note 
that $y\ge 0$, $y\ne 0$ implies $\langle y,\MM y\rangle>0$.
\end{proof}

In the sequel, a symmetric (but not necessarily positive) matrix that has 
a one-dimensional positive eigenspace will be called \emph{hyperbolic}.

\subsection{Shephard's inequalities}

An $m\times m$ hyperbolic matrix $\MM$ has $1$ positive and $m-1$ 
nonpositive eigenvalues. It is therefore immediately obvious that such a 
matrix satisfies $(-1)^m\det\MM \le 0$ (as the determinant is the product 
of the eigenvalues). Shephard's inequalities follow directly from this 
observation.

\begin{proof}[Proof of Theorem \ref{thm:shep}]
We may assume without loss of generality that all the convex bodies have 
nonempty interior, so that $\MM$ is a positive matrix (otherwise we may 
replace $K_i\leftarrow K_i+\varepsilon B$, $C_i\leftarrow C_i+\varepsilon 
B$ for any body $B$ with nonempty interior, and take $\varepsilon\to 0$ in 
the final inequality.) Condition \emph{2} of Lemma \ref{lem:hyp} is 
immediate from the Alexandrov-Fenchel inequality (Theorem \ref{thm:af} with
$K=\sum_i x_iK_i$ and $L=\sum_i y_i K_i$). Thus $\MM$ is hyperbolic
by Lemma \ref{lem:hyp}, which implies $(-1)^m\det\MM \le 0$.
\end{proof}

While this is only tangentially related to the rest of this note, let us 
take the opportunity to clarify the cases of equality in Shephard's 
inequalities.

\begin{prop}
\label{prop:shepeq}
In the setting and notations of Theorem \ref{thm:shep}, we have 
$\det\MM=0$ 
if and only if there are linearly independent vectors $x,y>0$ such that 
$K=\sum_i x_iK_i$, $L=\sum_iy_iK_i$ yield equality in the 
Alexandrov-Fenchel inequality of Theorem \ref{thm:af}.
\end{prop}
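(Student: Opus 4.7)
\medskip

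\textbf{Plan.} The proof assembles naturally from the spectral description of $\MM$ developed in Section~\ref{sec:lin}. By the proof of Theorem~\ref{thm:shep}, once we reduce (by a standard perturbation $K_i\leftarrow K_i+\varepsilon B$) to the case in which $\MM$ is a positive matrix, Lemma~\ref{lem:hyp} tells us that $\MM$ is hyperbolic with one-dimensional positive eigenspace spanned by a Perron eigenvector $v>0$, and in particular $\MM|_{v^\perp}$ is negative semidefinite. Thus $(-1)^m\det\MM\le 0$, and $\det\MM=0$ if and only if $\ker\MM\ne 0$. The strategy is to identify $\ker\MM$ with the equality locus of Alexandrov--Fenchel, by unwinding the proof of \emph{1}$\Rightarrow$\emph{2} in Lemma~\ref{lem:hyp}.

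\medskip

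First I would record the following explicit identity. For $y>0$ and any $x$, set $a=\langle x,v\rangle/\langle y,v\rangle$ and $z=x-ay$, so that $z\perp v$. The computation in the proof of \emph{1}$\Rightarrow$\emph{2} (carried out as an equality, not just an inequality) gives
$$
\langle x,\MM y\rangle^2-\langle x,\MM x\rangle\,\langle y,\MM y\rangle
=-\langle z,\MM z\rangle\,\langle y,\MM y\rangle.
$$
Since $\langle y,\MM y\rangle>0$ and $\MM|_{v^\perp}$ is negative semidefinite, equality in Alexandrov--Fenchel for $K=\sum_i x_iK_i$ and $L=\sum_i y_iK_i$ is equivalent to $\langle z,\MM z\rangle=0$. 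But a vector in the null cone of a negative semidefinite form lies in its kernel, so $\MM z$ is $\MM$-orthogonal to $v^\perp$, hence $\MM z\in\mathbb{R} v$; combined with $\langle v,\MM z\rangle=\lambda\langle v,z\rangle=0$, this forces $\MM z=0$. Summarizing: for $x$ arbitrary and $y>0$, equality in Alexandrov--Fenchel holds if and only if $x-ay\in\ker\MM$.

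\medskip

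The two directions of the proposition now fall out. For the ``if'' direction, linearly independent $x,y>0$ yielding Alexandrov--Fenchel equality give a nonzero element $x-ay\in\ker\MM$, so $\det\MM=0$. For the ``only if'' direction, assume $\det\MM=0$ and pick $z\in\ker\MM\setminus\{0\}$; from $\MM z=0$ and $\MM v=\lambda v$ with $\lambda>0$ we deduce $\langle z,v\rangle=0$, so $z$ and $v$ are linearly independent. Setting $y=v$ and $x=v+\varepsilon z$ for $\varepsilon>0$ small enough that $x>0$, one has $a=1$, $x-ay=\varepsilon z\in\ker\MM$, and $x,y$ are linearly independent and strictly positive. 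By the identity above, $K=\sum x_iK_i$ and $L=\sum y_iK_i$ realize equality in Theorem~\ref{thm:af}.

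\medskip

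The main obstacle is the initial reduction to the case in which $\MM$ is a positive matrix; the perturbation used to prove Theorem~\ref{thm:shep} is not directly compatible with an equality statement. I would handle this by observing that if some $K_i$ lies in a proper affine subspace of $\mathbb{R}^n$ in a way that forces an entire row of $\MM$ to vanish (or more generally makes the $K_i$ linearly dependent modulo the $C_j$), one can produce $x,y>0$ linearly independent giving trivial equality directly; otherwise, after replacing each $K_i$ by $K_i+\varepsilon B$ for a fixed body $B$ with nonempty interior and passing to the limit, the positivity-based argument above applies and both implications survive the limit.
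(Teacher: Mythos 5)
Your core idea for the positive-matrix case --- identifying the equality pairs of Alexandrov--Fenchel with $\ker\MM$ via the negative semidefiniteness of $\MM$ on $v^\perp$ --- is sound, but two points need attention. First, the displayed ``identity'' is false for your choice $a=\langle x,v\rangle/\langle y,v\rangle$: the computation in the proof of \emph{1}$\Rightarrow$\emph{2} of Lemma~\ref{lem:hyp} involves \emph{two} inequalities, $\langle z,\MM z\rangle\le 0$ and the minimization of the quadratic in $a$, and your identity would require the cross term $\langle z,\MM y\rangle$ to vanish, which $z\perp v$ does not guarantee (take $\MM=\left(\begin{smallmatrix}1&2\\2&1\end{smallmatrix}\right)$, $x=e_1$, $y=e_2$: the left side is $3$, the right side is $2$). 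The correct normalization is $a=\langle x,\MM y\rangle/\langle y,\MM y\rangle$, for which $z$ is $\MM$-orthogonal to $y$ rather than Euclidean-orthogonal to $v$. Your two applications happen to survive because in each of them one ends up with $z\in\ker\MM$, where the two normalizations coincide and the identity does hold; so this error is repairable, but as written the asserted equivalence ``equality iff $\langle z,\MM z\rangle=0$'' rests on a false premise (only the forward implication follows from the chain of inequalities; the backward one needs the radical argument you give afterwards).

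The genuine gap is the reduction to positive $\MM$, which you correctly identify as the main obstacle but do not resolve. The proposed dichotomy is not exhaustive ($\MM$ can have zero entries through dimensional degeneracies of various subfamilies without any row vanishing or any ``trivial'' equality pair being apparent), and the claim that ``both implications survive the limit'' is unjustified: an equality pair for $\MM$ gives no control on $\det\MM_\varepsilon$ for the perturbed matrix, and $\det\MM=0$ does not imply $\det\MM_\varepsilon=0$, so neither direction passes through the approximation. The paper's own proof sidesteps positivity and Perron--Frobenius entirely, assuming only $\MM\ne 0$. For ``$\det\MM=0$ implies an equality pair,'' it picks $z\in\ker\MM$, notes that equality holds trivially for $(z,y)$ with any $y>0$ independent of $z$, and uses the invariance of the discriminant $\langle x,\MM y\rangle^2-\langle x,\MM x\rangle\langle y,\MM y\rangle$ under the shear $z\leftarrow z+by$ to replace $z$ by a positive vector. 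For the converse, it applies a first-order variational argument: $q(v):=\langle x+v,\MM y\rangle^2-\langle x+v,\MM(x+v)\rangle\langle y,\MM y\rangle$ is nonnegative near $0$ by Alexandrov--Fenchel and vanishes at $0$, so $\nabla q(0)=0$, which exhibits the explicit nonzero kernel vector $\langle y,\MM y\rangle x-\langle x,\MM y\rangle y$. You should either adopt one of these positivity-free arguments or restrict the statement you prove; as it stands the degenerate case is not covered.
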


\begin{proof}
We must show $\det\MM=0$ if and only if $\langle x,\MM 
y\rangle^2=\langle x,\MM x\rangle\,\langle y,\MM y\rangle$ for some 
linearly independent $x,y>0$.  We may assume $\MM\ne 0$ (else
the result is trivial).

Suppose first that $\det\MM=0$. Then there exists $z\in\ker\MM$, $z\ne 0$. 
Choose any $y>0$ that is linearly independent of $z$. Evidently $\langle 
z,\MM y\rangle^2=\langle z,\MM z\rangle\,\langle y,\MM y\rangle$. But as 
this identity is invariant under the replacement $z\leftarrow z+by$ (as in 
the proof of \emph{2}$\Rightarrow$\emph{3} of Lemma \ref{lem:hyp}), we may 
choose $x=z+by>0$ for $b$ sufficiently large.

Now suppose $\langle x,\MM y\rangle^2=\langle x,\MM 
x\rangle\,\langle y,\MM y\rangle$ for linearly independent $x,y>0$.
Then
$$
	q(v) := \langle x+v,\MM y\rangle^2 -
	\langle x+v,\MM (x+v)\rangle\,\langle y,\MM y\rangle
$$
satisfies $q(0)=0$, and $q(v)\ge 0$ for all $v$ in a neighborhood of $0$
by the Alexandrov-Fenchel inequality. Thus
$\nabla q(0)=0$, which yields $z=\langle y,\MM y\rangle x-
\langle x,\MM y\rangle y \in \ker\MM$. Moreover, $z\ne 0$ as
$x,y$ are linearly independent. Thus $\det\MM=0$.
\end{proof}

Proposition \ref{prop:shepeq} reduces the equality cases of Shephard's 
inequalities to those of the Alexandrov-Fenchel inequality. The 
characterization of the latter is a long-standing open problem \cite[\S 
7.6]{Sch14}, which was recently settled in several important cases in 
\cite{SvH20,SvH19}. This problem remains open in full generality.

\subsection{A Sylvester criterion}

While any hyperbolic $m\times m$ matrix $\MM$ trivially satisfies 
$(-1)^m\det\MM \le 0$, the converse implication clearly does not hold: the 
sign of the determinant does not determine the number of positive 
eigenvalues. However, the implication can be reversed if the 
determinant condition holds for all principal submatrices of $\MM$. This 
hyperbolic analogue of the classical Sylvester criterion may be proved in 
essentially the same manner.\footnote{%
	The author learned the elementary approach used here
	from lecture notes of M.\ Hlad\'ik.
}

In the following, we denote for any $m\times m$ symmetric matrix $\MM$ and 
subset $I\subseteq[m]$ by $\MM_I:=(\MM_{ij})_{i,j\in I}$ the associated 
principal submatrix.

\begin{lem}
\label{lem:syl}
For a symmetric positive $m\times m$ matrix,
the following are equivalent:
\begin{enumerate}[1.]
\item The positive eigenspace of $\MM$ is one-dimensional.
\item $(-1)^{|I|}\det\MM_I \le 0$ for all $I\subseteq [m]$.
\end{enumerate}
\end{lem}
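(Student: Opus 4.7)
My plan is to handle the two directions separately. For direction \emph{1}$\Rightarrow$\emph{2}, I would combine Cauchy interlacing with Perron-Frobenius: every principal submatrix $\MM_I$ of the positive matrix $\MM$ is itself positive, so Perron-Frobenius yields at least one positive eigenvalue of $\MM_I$, while iterated Cauchy interlacing (removing one row-and-column at a time to descend from $\MM$ to $\MM_I$) shows that $\MM_I$ has at most as many positive eigenvalues as $\MM$, namely one. Hence $\MM_I$ has exactly one positive eigenvalue and $|I|-1$ nonpositive eigenvalues, so $(-1)^{|I|}\det\MM_I\le 0$.

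For direction \emph{2}$\Rightarrow$\emph{1}, my plan is to analyze the characteristic polynomial of $\MM$. The standard identity
$$
\chi_\MM(\lambda) = \sum_{k=0}^m (-1)^k e_k\,\lambda^{m-k},\qquad
e_k := \sum_{|I|=k} \det\MM_I,
$$
lets condition \emph{2} be read off as $(-1)^k e_k\le 0$ for every $k\ge 1$. Substituting $\mu = 1/\lambda$ for $\lambda>0$ and factoring out $\lambda^m$, the positive eigenvalues of $\MM$ correspond bijectively, with matching multiplicities, to the positive roots of
$$
h(\mu) := 1 + \sum_{k=1}^m (-1)^k e_k\,\mu^k.
$$
Now $h(0)=1$, every nonconstant coefficient of $h$ is nonpositive, and the coefficient of $\mu$ equals $-\mathrm{tr}\,\MM<0$ strictly because $\MM$ has strictly positive diagonal. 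A direct term-by-term estimate then yields $h'(\mu)\le -\mathrm{tr}\,\MM<0$ for every $\mu\ge 0$, so $h$ is strictly decreasing on $[0,\infty)$. It follows that $h$ has at most one positive root, and any such root is simple --- so $\MM$ has at most one positive eigenvalue counted with multiplicity. Perron-Frobenius supplies at least one, and direction \emph{2}$\Rightarrow$\emph{1} follows.

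The main obstacle is the delicate direction \emph{2}$\Rightarrow$\emph{1} when $\MM$ is singular or has vanishing principal minors: the condition $(-1)^m\det\MM\le 0$ alone is consistent with $\MM$ having two positive eigenvalues and several zero eigenvalues, so one cannot immediately rule out a second positive eigenvalue from the size-$m$ minor in isolation. Cauchy interlacing combined with the inductive hypothesis on smaller submatrices already forces the positive spectrum of $\MM$ to be at most two-dimensional, but ruling out the two-dimensional case requires using the full information from all principal minors simultaneously. The characteristic-polynomial approach packages this information into the monotonicity of a single scalar function and thereby avoids any case analysis; a more classical Schur-complement induction would instead require separate treatment of each subcase where some principal submatrix degenerates.
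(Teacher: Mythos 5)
Your proof is correct, and the direction \emph{2}$\Rightarrow$\emph{1} takes a genuinely different route from the paper's. For \emph{1}$\Rightarrow$\emph{2} the paper does not use Cauchy interlacing: it notes that condition \emph{2} of Lemma \ref{lem:hyp} is inherited by every principal submatrix (restrict $x,y$ to be supported on $I$), so each $\MM_I$ is hyperbolic by that lemma; your interlacing argument is an equally standard alternative reaching the same signature count. The substantive divergence is in \emph{2}$\Rightarrow$\emph{1}: the paper argues by induction on $m$, supposing the positive eigenspace has dimension at least two, using $(-1)^m\det\MM\le 0$ to extract a third eigenvector with nonnegative eigenvalue, and then building two independent vectors supported on a proper subset $I$ that span a positive subspace for $\MM_I$, contradicting the induction hypothesis. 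You instead pass to the reversed characteristic polynomial $h(\mu)=1+\sum_{k\ge 1}(-1)^k e_k\mu^k$ and observe that $h(0)=1$ while $h'(\mu)\le-\mathrm{tr}\,\MM<0$ on $[0,\infty)$, so $h$ has at most one positive root and that root is simple; since the reciprocal map matches nonzero roots of $\chi_\MM$ and $h$ with multiplicities, and symmetry of $\MM$ identifies algebraic multiplicity with the dimension of the eigenspace, this correctly bounds the positive eigenspace by one dimension, with Perron--Frobenius supplying the lower bound. Your argument is non-inductive and in fact uses only the aggregated inequalities $(-1)^k e_k\le 0$ on the \emph{sums} of the $k\times k$ principal minors, so it establishes a formally stronger implication than stated; what the paper's approach buys in exchange is that it stays entirely within the quadratic-form/hyperbolicity framework of section \ref{sec:lin}, which is the language in which the lemma is then applied to mixed volumes.
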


\begin{proof}
To prove \emph{1}$\Rightarrow$\emph{2}, note first that condition \emph{2} 
of Lemma \ref{lem:hyp} is inherited by all its principal submatrices
$\MM_I$ (as one may restrict to $x,y$ supported on $I$). The conclusion 
therefore follows immediately from Lemma \ref{lem:hyp}.

To prove \emph{2}$\Rightarrow$\emph{1}, we argue by induction on $m$. For 
$m=2$, it suffices to note that as $\MM$ has at least one positive 
eigenvalue by the Perron-Frobenius theorem, $\det\MM \le 0$ implies that 
its other eigenvalue must be nonpositive.

Now let $m>2$ and assume the result has been proved in dimensions up to 
$m-1$. Then \emph{2} implies that $\MM_I$ is hyperbolic for all 
$I\subsetneq[m]$. By the Perron-Frobenius theorem, $\MM$ has an 
eigenvector $v$ with positive eigenvalue. Now suppose \emph{1} fails, that 
is, $\MM$ is not hyperbolic. Then there must be another eigenvector 
$w\perp v$ with positive eigenvalue. As $(-1)^m\det\MM\le 0$, there must 
then be a third eigenvector $u\perp\{v,w\}$ with nonnegative eigenvalue. 
Choose any $i\in[m]$ such that $u_i\ne 0$ and let $I=[m]\backslash\{i\}$. 
Choose $a,b\in\mathbb{R}$ so that $x:=v-au$ and $y:=w-bu$ satisfy 
$x_i=y_i=0$. By construction, $x,y$ are linearly independent and $\langle 
z,\MM z\rangle>0$ for all $z\in\mathrm{span}\{x,y\}$, $z\ne 0$. As $x,y$ 
are supported on $I$, this implies $\MM_I$ has a positive eigenspace 
of dimension at least two, contradicting the induction hypothesis. 
\end{proof}

It follows immediately from Lemma \ref{lem:syl} that Conjecture 
\ref{conj:fed} is equivalent to the statement that the matrix $\MM$ is 
hyperbolic. This observation will form the basis for the proof of Theorem 
\ref{thm:main} in section \ref{sec:main}: we will show that hyperbolicity 
of $\MM$ contradicts the Hodge-Riemann relations for simple convex 
polytopes.

\section{Hodge-Riemann relations}
\label{sec:hodge}

The Hodge-Riemann relations in algebraic geometry give rise to higher 
order analogues of the Alexandrov-Fenchel inequality \cite{McM93,Tim99}. 
While these inequalities are not usually stated in this form in the 
literature, they may be equivalently formulated as explicit inequalities 
between mixed volumes. The aim of this section is to draw attention to 
this elementary formulation of the Hodge-Riemann relations in terms of 
familiar objects from classical convex geometry.

Recall that a convex polytope in $\mathbb{R}^n$ is called \emph{simple} if 
it has nonempty interior and each vertex is contained in exactly $n$ 
facets. In the following, let us fix an arbitrary simple polytope $\Lambda$ 
in $\mathbb{R}^n$, and denote by $\mathcal{P}(\Lambda)$ the collection of 
polytopes that are \emph{strongly isomorphic} to $\Lambda$: that is,
$P\in\mathcal{P}(\Lambda)$ if and only if 
$$
	\dim F(P,u) = \dim F(\Lambda,u)\quad\mbox{for all }u\in S^{n-1},
$$
where $F(P,u)$ denotes the face of $P$ with normal direction $u$. For the 
basic properties of simple and strongly ismorphic polytopes, the reader is 
referred to \cite[\S 2.4]{Sch14}. For the purposes of this note, the only 
significance of these definitions is that they are needed for the validity 
of the following theorem (see section \ref{sec:hrgeneral}).

\begin{thm}[McMullen-Timorin]
\label{thm:hodge}
Fix $n\ge 2$ and a simple polytope $\Lambda\in\mathbb{R}^n$, and
let $m\ge 1$, $k\le n/2$, 
$K_1,\ldots,K_m,L,C_1,\ldots,C_{n-2k}\in\mathcal{P}(\Lambda)$, 
and $x\in\mathbb{R}^m$. If 
\begin{equation}
\label{eq:primitive}
	\sum_i x_i\, \V(K_i[k],M[k-1],L,C_1,\ldots,C_{n-2k}) = 0
\end{equation}
holds for every $M\in\mathcal{P}(\Lambda)$, then
\begin{equation}
\label{eq:hr}
	(-1)^k 
	\sum_{i,j} x_i x_j\, \V(K_i[k],K_j[k],C_1,\ldots,C_{n-2k}) \ge 0.
\end{equation}
Moreover, the statement is nontrivial in the sense that for any 
$n\ge 2$ and $k\le n/2$, there is a simple polytope $\Lambda=
L=C_1=\cdots=C_{n-2k}$ in 
$\mathbb{R}^n$, $m\ge 1$, 
$K_1,\ldots,K_m\in\mathcal{P}(\Lambda)$, and 
$x\in\mathbb{R}^m$
so that \eqref{eq:primitive} holds and the inequality in \eqref{eq:hr} is 
strict.
\end{thm}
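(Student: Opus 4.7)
The substance of Theorem~\ref{thm:hodge} is the mixed Hodge-Riemann bilinear relation for the polytope algebra of a simple polytope, due to McMullen \cite{McM93} and Timorin \cite{Tim99}. My plan is to set up this algebraic formalism, identify \eqref{eq:primitive} and \eqref{eq:hr} with the algebraic primitivity and Hodge-Riemann statements, and then exhibit an explicit example for the nontriviality clause.

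First I would recall the polytope algebra. For a simple polytope $\Delta$, the family $\mathcal{P}(\Delta)$ is parametrized by an open cone of admissible support vectors inside a finite-dimensional real vector space, with Minkowski addition linear in these coordinates; mixed volume then extends by polarization to a symmetric $n$-linear form. Following McMullen and Timorin one constructs a graded-commutative $\mathbb{R}$-algebra $A^\bullet=\bigoplus_{j=0}^{n}A^j$ generated in degree one by classes $[K]$, with $A^0=A^n=\mathbb{R}$ and top-degree product $[K_1]\cdots[K_n]=\V(K_1,\ldots,K_n)$; the $[K]^j$ with $K\in\mathcal{P}(\Delta)$ span $A^j$ by polarization, and Poincaré duality $A^j\times A^{n-j}\to\mathbb{R}$ holds. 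In this language, the mixed Hodge-Riemann theorem (obtained from the pure case of $L^{n-2k}$ via multilinear expansion in $L$, or directly from Timorin's framework) asserts that for $L,C_1,\ldots,C_{n-2k}\in\mathcal{P}(\Delta)$ and any $\alpha\in A^k$ with $\alpha\cdot[L]\cdot[C_1]\cdots[C_{n-2k}]=0$ one has $(-1)^{k}\alpha\cdot\alpha\cdot[C_1]\cdots[C_{n-2k}]\ge0$.

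Next I would translate. Setting $\alpha=\sum_i x_i[K_i]^k$, Poincaré duality $A^{k-1}\times A^{n-k+1}\to\mathbb{R}$ together with the fact that the classes $[M]^{k-1}$ span $A^{k-1}$ shows that \eqref{eq:primitive} is precisely the primitivity condition $\alpha\cdot[L]\cdot[C_1]\cdots[C_{n-2k}]=0$ in $A^{n-k+1}$. The top-degree identification then converts the Hodge-Riemann quadratic form into $(-1)^{k}\sum_{i,j}x_ix_j\V(K_i[k],K_j[k],C_1,\ldots,C_{n-2k})$, which is exactly \eqref{eq:hr}.

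For the nontriviality clause I would choose $\Delta=[0,1]^n$ and $L=C_1=\cdots=C_{n-2k}=\Delta$. The cube's polytope algebra is $A^\bullet\cong\mathbb{R}[t_1,\ldots,t_n]/(t_1^2,\ldots,t_n^2)$ with $[K]=\sum_i h(K,e_i)\,t_i$ modulo translations, so $\dim A^k=\binom{n}{k}$; Hard Lefschetz then gives $\dim P^k=\binom{n}{k}-\binom{n}{k-1}>0$ for every $k\le n/2$. Taking enough axis-aligned boxes $K_1,\ldots,K_m$ so that the $[K_j]^k$ span $A^k$ (generically achievable since $(\prod_{i\in I}c_i)_{|I|=k}$ ranges over a spanning set as $c$ varies over positive vectors), any nonzero primitive $\alpha\in P^k$ can be written as $\sum_j x_j[K_j]^k$, and strict positive definiteness of the Hodge-Riemann form on $P^k$ produces the strict inequality in \eqref{eq:hr}. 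The main obstacle is of course the Hodge-Riemann theorem itself, which is deep and rests on the McMullen-Timorin machinery; once that is granted, the translation is essentially bookkeeping, and nontriviality is transparent from the explicit monomial basis of the cube's polytope algebra.
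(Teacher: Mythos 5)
Your proposal is correct and follows essentially the same route as the paper: both reduce the statement to the McMullen--Timorin Hodge--Riemann relations (the paper phrases them via Timorin's volume polynomial and differential operators rather than the abstract polytope algebra, but these are equivalent), translate using the fact that $k$-th powers of classes of polytopes in $\mathcal{P}(\Delta)$ span degree $k$ by polarization together with duality to reduce primitivity to testing against $[M]^{k-1}$, and establish nontriviality on the cube via $h_k=\binom{n}{k}$. The one caveat is your parenthetical claim that the mixed relations follow from the pure case ``via multilinear expansion in $L$'' --- that deduction is not straightforward --- but this is moot since Timorin's theorem is already stated and cited in the mixed form.
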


The case $k=1$ of Theorem \ref{thm:hodge} is nothing other than 
the Alexandrov-Fenchel inequality. To see why 
this is so, assume without loss of generality that $L=\sum_i y_i K_i$
for some $y\ge 0$, $y\ne 0$ (otherwise let
$m\leftarrow m+1$ and $K_{m+1}\leftarrow L$), and define 
$$
	\MM_{ij} = \V(K_i,K_j,C_1,\ldots,C_{n-2}).
$$
Then the statement of Theorem \ref{thm:hodge} for $k=1$ may be 
formulated as
$$
	\langle x,\MM y\rangle =0\quad\mbox{implies}\quad
	\langle x,\MM x\rangle \le 0
$$
for any $x$ and $y\ge 0$, $y\ne 0$. Thus by Lemma 
\ref{lem:hyp}, the inequality of Theorem \ref{thm:hodge} in the case $k=1$ is equivalent to 
the Alexandrov-Fenchel inequality for convex bodies in 
$\mathcal{P}(\Lambda)$. As any collection of convex bodies can be 
approximated by simple strongly isomorphic polytopes 
\cite[Theorem 2.4.15]{Sch14}, the general case of the Alexandrov-Fenchel 
inequality is further equivalent to this special case.

For $k>1$, the statement of Theorem \ref{thm:hodge} may be viewed as an 
analogue of the Alexandrov-Fenchel inequality for 
$\MM_{ij}=\V(K_i[k],K_j[k],C_1,\ldots,C_{n-2k})$. Thus the Hodge-Riemann 
relations are reminiscent of Conjecture \ref{conj:fed}, but their 
formulation is considerably more subtle. In section \ref{sec:main}, we 
will show that the Hodge-Riemann relations in fact contradict Conjecture 
\ref{conj:fed}, disproving the latter.

The aim of the rest of this section is to convince the reader that the 
statement of Theorem \ref{thm:hodge} given here in terms of mixed volumes 
is equivalent to the statement of the Hodge-Riemann relations as given in 
\cite{Tim99}. The reader who is primarily interested in Theorem 
\ref{thm:main} may safely jump ahead to section \ref{sec:main}.

To explain the formulation of \cite{Tim99}, we must first introduce 
some additional
notation. Let $u_1,\ldots,u_N\in S^{n-1}$ be the normal directions of the 
facets of $\Lambda$. For any $P\in\mathcal{P}(\Lambda)$, we denote by
$\hh_P\in\mathbb{R}^N$ its support vector
$$
	(\hh_P)_i := \sup_{y\in P} \langle y,u_i\rangle.
$$
Then there is a homogenous polynomial $V:\mathbb{R}^N\to\mathbb{R}$ of 
degree $n$, called the \emph{volume polynomial}, so that 
$\Vol(P)=V(\hh_P)$ 
for every $P\in\mathcal{P}(\Lambda)$ \cite[\S 5.2]{Sch14}. Moreover, as 
$\mathcal{P}(\Lambda)$ is closed under addition \cite[\S 2.4]{Sch14}, it 
follows immediately from the definition of mixed volumes that we have
for any $P_1,\ldots,P_n\in\mathcal{P}(\Lambda)$
$$
	\V(P_1,\ldots,P_n)=\frac{1}{n!}D_{\hh_{P_1}}
	\cdots D_{\hh_{P_n}}V,
$$
where $D_\hh$ denotes the directional derivative in direction $\hh$.
In this notation, the Hodge-Riemann relations are formulated in 
\cite[p.\ 385]{Tim99} as follows:

\begin{thm}
\label{thm:timorin}
Let $k\le n/2$, $L,C_1,\ldots,C_{n-2k}\in\mathcal{P}(\Lambda)$, and let
$\alpha=\sum_{|I|=k}\alpha_I D^I$ 
be a homogeneous differential operator of order $k$ with 
constant coefficients. If
\begin{equation}
\label{eq:timpr}
	\alpha D_{\hh_L}D_{\hh_{C_1}}\cdots
	D_{\hh_{C_{n-2k}}}V=0,
\end{equation}
then
\begin{equation}
\label{eq:timhr}
	(-1)^k\alpha^2 D_{\hh_{C_1}}\cdots
        D_{\hh_{C_{n-2k}}}V\ge 0.
\end{equation}
Moreover, equality is attained if and only if $\alpha V=0$.
\end{thm}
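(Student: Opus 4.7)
The plan is to derive Theorem \ref{thm:timorin} from Theorem \ref{thm:hodge}---which is itself taken as the cited result of McMullen and Timorin---by showing the two statements are equivalent under an explicit dictionary. The bridge is the identity $\V(P_1,\ldots,P_n) = \tfrac{1}{n!}\,D_{\hh_{P_1}}\cdots D_{\hh_{P_n}} V$ valid for $P_i\in\mathcal{P}(\Delta)$, which converts every mixed volume into a directional derivative of $V$.

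The dictionary I would use associates to any $x\in\mathbb{R}^m$ and $K_1,\ldots,K_m\in\mathcal{P}(\Delta)$ the constant-coefficient homogeneous operator $\alpha_x := \sum_i x_i D_{\hh_{K_i}}^k$ of order $k$. Under this substitution the quadratic expression in \eqref{eq:hr} becomes $\tfrac{1}{n!}\,\alpha_x^2 D_{\hh_{C_1}}\cdots D_{\hh_{C_{n-2k}}} V$, matching the left-hand side of \eqref{eq:timhr} up to a positive factor; the linear form in \eqref{eq:primitive} becomes $\tfrac{1}{n!}\,D_{\hh_M}^{k-1} D_{\hh_L} D_{\hh_{C_1}}\cdots D_{\hh_{C_{n-2k}}} \alpha_x V$. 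The key reduction is then that $\beta := D_{\hh_L} D_{\hh_{C_1}}\cdots D_{\hh_{C_{n-2k}}} \alpha_x V$ is a homogeneous polynomial of degree $k-1$ in $N$ variables, and $\{\hh_M : M\in\mathcal{P}(\Delta)\}$ contains an open cone in $\mathbb{R}^N$; the identity $D_h^{k-1}\beta = (k-1)!\,\beta(h)$ for $\beta$ of degree $k-1$ then makes the condition \eqref{eq:primitive}---that $D_{\hh_M}^{k-1}\beta$ vanish for every such $M$---equivalent to the polynomial identity $\beta\equiv 0$, which is exactly \eqref{eq:timpr}. Conversely, every constant-coefficient homogeneous operator of order $k$ is of the form $\alpha_x$ for a suitable choice of $K_i$ and $x$, because any degree-$k$ form on $\mathbb{R}^N$ is a real linear combination of $k$-th powers of linear forms associated with vectors drawn from any prescribed open cone (polarization). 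Together these steps show that the inequality part of Theorem \ref{thm:timorin} coincides, term for term, with Theorem \ref{thm:hodge}.

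The moreover-clause characterizing equality by $\alpha V = 0$ does not appear in Theorem \ref{thm:hodge}; for this I would invoke \cite{Tim99} directly, where the same Hodge-Riemann machinery shows that the quadratic form $\alpha\mapsto(-1)^k\alpha^2 D_{\hh_{C_1}}\cdots D_{\hh_{C_{n-2k}}}V$ is definite of the asserted sign on the primitive subspace of the Lefschetz decomposition modulo the kernel $\{\alpha : \alpha V = 0\}$. I expect the main obstacle to be the bookkeeping around the two polarization reductions---from a condition quantified over $\mathcal{P}(\Delta)$ to a single polynomial identity, and from an arbitrary order-$k$ operator to one of the form $\alpha_x$---rather than any substantial inequality: the geometric content is entirely supplied by the cited Hodge-Riemann theorem for simple polytopes.
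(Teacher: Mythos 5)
Your proposal is correct and is essentially the paper's own argument: the paper establishes exactly this equivalence between Theorem \ref{thm:hodge} and Theorem \ref{thm:timorin} via the dictionary $\alpha=\sum_i x_i(D_{\hh_{K_i}})^k$, the identity $\V(P_1,\ldots,P_n)=\frac{1}{n!}D_{\hh_{P_1}}\cdots D_{\hh_{P_n}}V$, and the polarization statement of Lemma \ref{lem:pol}, with the equality clause taken from \cite{Tim99} (you merely run the equivalence in the direction \eqref{eq:primitive}--\eqref{eq:hr} $\Rightarrow$ \eqref{eq:timpr}--\eqref{eq:timhr}, and replace the paper's reduction of \eqref{eq:timpr} to order-$(k-1)$ test operators by the equivalent observation that the degree-$(k-1)$ polynomial $\alpha D_{\hh_L}D_{\hh_{C_1}}\cdots D_{\hh_{C_{n-2k}}}V$ vanishes on the open cone of support vectors of $\mathcal{P}(\Delta)$).
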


To write Theorem \ref{thm:timorin} in terms of mixed volumes, we 
need the following.

\begin{lem}
\label{lem:pol}
For any homogeneous differential operator 
$\alpha=\sum_{|I|=k}\alpha_I D^I$, there exist
$m\ge 1$, $K_1,\ldots,K_m\in\mathcal{P}(\Lambda)$, and
$x\in\mathbb{R}^m$ so that
$\alpha = \sum_i x_i (D_{\hh_{K_i}})^k$.
\end{lem}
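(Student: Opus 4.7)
The plan is to pass to the ``symbol'' of a constant-coefficient differential operator and reduce the statement to a spanning assertion for powers of linear forms. A homogeneous operator $\alpha=\sum_{|I|=k}\alpha_I D^I$ on $\mathbb{R}^N$ is uniquely determined by its symbol $p_\alpha(\xi):=\sum_{|I|=k}\alpha_I\xi^I$, a homogeneous polynomial of degree $k$ in $\xi\in\mathbb{R}^N$; likewise, $(D_{\hh_K})^k$ has symbol $\xi\mapsto\langle\hh_K,\xi\rangle^k$. Thus it suffices to show that the linear span of $\{\langle\hh_K,\xi\rangle^k:K\in\mathcal{P}(\Delta)\}$ coincides with the full space of homogeneous degree-$k$ polynomials on $\mathbb{R}^N$.

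I would build this up in two steps. First, by a classical polarization argument, the powers $\{\langle a,\xi\rangle^k:a\in\mathbb{R}^N\}$ already span the full space of homogeneous degree-$k$ polynomials in $\xi$: a linear functional annihilating every $\langle a,\xi\rangle^k$ corresponds to a symmetric $k$-tensor contracting to zero with every $a^{\otimes k}$, and polarizing $a\mapsto a^{\otimes k}$ (differentiating in $k$ independent directions) forces the tensor to contract to zero with every $a_1\otimes\cdots\otimes a_k$, hence to vanish. Second, I would invoke the standard fact \cite[\S 2.4]{Sch14} that the \emph{type cone} $\mathcal{T}(\Delta):=\{\hh_P:P\in\mathcal{P}(\Delta)\}$ is a nonempty open convex cone in $\mathbb{R}^N$.

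To finish, I would promote the spanning property from all of $\mathbb{R}^N$ to the open subset $\mathcal{T}(\Delta)$: since the map $a\mapsto\langle a,\xi\rangle^k$ is polynomial in $a$, any linear functional on the space of degree-$k$ polynomials in $\xi$ that annihilates this map for every $a\in\mathcal{T}(\Delta)$ gives a polynomial in $a$ vanishing on the nonempty open set $\mathcal{T}(\Delta)$, hence vanishing identically. Combined with the first step, this gives $\sspan\{\langle\hh_K,\xi\rangle^k:K\in\mathcal{P}(\Delta)\}=\sspan\{\langle a,\xi\rangle^k:a\in\mathbb{R}^N\}$, i.e.\ the full space of degree-$k$ polynomials. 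Expressing $p_\alpha$ as a finite combination of such powers and translating back through the symbol correspondence yields the desired representation $\alpha=\sum_{i=1}^m x_i(D_{\hh_{K_i}})^k$.

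I do not anticipate any substantive obstacle: both ingredients (polarization and openness of the type cone) are classical, and the ``analytic continuation'' step that transfers the spanning property from $\mathbb{R}^N$ to $\mathcal{T}(\Delta)$ is immediate once openness is in hand. The only place where the hypothesis that $\Delta$ is simple (and $\mathcal{P}(\Delta)$ its class of strongly isomorphic perturbations) actually enters the argument is in ensuring that $\mathcal{T}(\Delta)$ has nonempty interior in $\mathbb{R}^N$, which is precisely the content of the cited result.
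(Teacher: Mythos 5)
Your proof is correct, but it takes a genuinely different route from the paper's. The paper argues constructively: it uses the fact that $\hh_\Delta+\varepsilon z$ is the support vector of some member of $\mathcal{P}(\Delta)$ for small $\varepsilon>0$ (\cite[Lemma 2.4.13]{Sch14}) to write each coordinate vector $e_i$ as a difference $\hh_{L_i}-\hh_{L_i'}$ of support vectors, expands $\alpha$ into a linear combination of products $D_{\hh_{R_{i_1}}}\cdots D_{\hh_{R_{i_k}}}$ with $R_{i_j}\in\mathcal{P}(\Delta)$, and then applies the explicit polarization formula to convert each such product into a signed sum of $k$-th powers $(D_{\hh_R})^k$, where $R$ is a Minkowski sum of the $R_{i_j}$ and hence again lies in $\mathcal{P}(\Delta)$. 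You instead pass to symbols and replace the explicit polarization identity by the abstract fact that powers of linear forms span the homogeneous degree-$k$ polynomials, and you replace the difference trick by a Zariski-density argument: since the type cone is open and $a\mapsto\ell(\langle a,\xi\rangle^k)$ is polynomial in $a$ for any linear functional $\ell$, annihilation on the type cone forces annihilation everywhere. Both arguments rest on the same geometric input, namely the openness of the set of support vectors of polytopes strongly isomorphic to a simple polytope, and your remark that this is the only place where simplicity enters is exactly right. Your version is shorter and cleaner as a pure existence statement; the paper's version is elementary and explicit, producing concrete bodies $K_i$ (Minkowski sums of small perturbations of $\Delta$) and coefficients $x_i$. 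Either proof is acceptable.
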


\begin{proof}
We first recall that for any $z\in\mathbb{R}^N$, 
$\hh_\Lambda+\varepsilon z$ is the support vector of some polytope
$K\in\mathcal{P}(\Lambda)$ for sufficiently small $\varepsilon$ 
(as $\Lambda$ is simple, cf.\ \cite[Lemma 2.4.13]{Sch14}).
We may therefore write $z = \hh_{L}-\hh_{L'}$ where
$L=\varepsilon^{-1}K$ and $L'=\varepsilon^{-1}\Lambda$.

Now denote by $e_1,\ldots,e_N$ the standard coordinate basis of 
$\mathbb{R}^N$. By the above observation, we may write $e_i = 
\hh_{L_{i}}-\hh_{L_{i}'}$ for $L_{i},L_{i}'\in\mathcal{P}(\Lambda)$. We 
can therefore write
$$
	\alpha = \sum_{i_1\le\cdots\le i_k}
	\alpha_{i_1,\ldots,i_k}
	(D_{\hh_{L_{i_1}}}-D_{\hh_{L_{i_1}'}})\cdots
	(D_{\hh_{L_{i_k}}}-D_{\hh_{L_{i_k}'}}).
$$
By expanding the product, we may evidently express $\alpha$ as a linear 
combination of differential operators of the form
$D_{\hh_{R_{i_1}}}\cdots D_{\hh_{R_{i_k}}}$ with
$R_i\in\mathcal{P}(\Lambda)$. But as
$$
	D_{\hh_{R_{i_1}}}\cdots D_{\hh_{R_{i_k}}} =
	\frac{1}{k!}\sum_{\delta\in\{0,1\}^k}
	(-1)^{k+\delta_1+\cdots+\delta_k}
	(D_{\hh_{\delta_1R_{i_1}+\cdots+\delta_k R_{i_k}}})^k
$$
by the polarization formula \cite[p.\ 137]{BZ88}, the proof 
is readily concluded.
\end{proof}

We are now ready to show that the Hodge-Riemann relations expressed by 
Theorems \ref{thm:hodge} and \ref{thm:timorin} are equivalent.
First, note that $\alpha D_{\hh_L}D_{\hh_{C_1}}\cdots D_{\hh_{C_{n-2k}}}V$ 
in \eqref{eq:timpr} is a homogeneous polynomial of degree $k-1$. Thus 
\eqref{eq:timpr} is equivalent to the statement that 
$\beta\alpha D_{\hh_L}D_{\hh_{C_1}}\cdots D_{\hh_{C_{n-2k}}}V=0$ for every 
homogeneous differential operator $\beta$ of order $k-1$. By Lemma 
\ref{lem:pol}, the statement of Theorem 
\ref{thm:timorin} (without the equality case) may be equivalently 
formulated as follows: if
$$
	\alpha (D_{\hh_M})^{k-1} D_{\hh_L}
	D_{\hh_{C_1}}\cdots D_{\hh_{C_{n-2k}}}V=0
$$
for all $M\in\mathcal{P}(\Lambda)$, then 
$$
        (-1)^k\alpha^2 D_{\hh_{C_1}}\cdots
        D_{\hh_{C_{n-2k}}}V\ge 0.	
$$
That \eqref{eq:timpr}--\eqref{eq:timhr} imply
\eqref{eq:primitive}--\eqref{eq:hr} follows immediately by choosing
the differential operator
$\alpha = \sum_i x_i (D_{\hh_{K_i}})^k$.
Conversely, that \eqref{eq:primitive}--\eqref{eq:hr} imply  
\eqref{eq:timpr}--\eqref{eq:timhr} follows as any $\alpha$ can be 
expressed as $\alpha = \sum_i x_i (D_{\hh_{K_i}})^k$ by Lemma 
\ref{lem:pol}.

It remains to check that the Hodge-Riemann relations are nontrivial. This 
is certainly not obvious at first sight: the condition 
\eqref{eq:primitive} is a very strong one (as it must hold for \emph{any} 
$M\in\mathcal{P}(\Lambda)$), and it is not clear \emph{a priori} that it 
can be satisfied in any nontrivial situation. To show this is the case, 
consider the special case where $L=C_1=\cdots=C_{n-2k}=\Lambda$, and 
define the spaces
$$
	P_k := \{\alpha:\alpha(D_{\hh_\Lambda})^{n-2k+1}V=0\},\qquad\quad
	I := \{\alpha:\alpha V=0\}.
$$
The remarkable combinatorial theory underlying the Hodge-Riemann relations 
enables us to compute \cite[Corollary 5.3.4]{Tim99}
$$
	\dim(P_k/I) = h_k - h_{k-1},
$$
where $(h_1,\ldots,h_n)$ is the so-called $h$-vector of $\Lambda$. To show 
the Hodge-Riemann relations are nontrivial, it suffices to construct a 
simple polytope $\Lambda$ in $\mathbb{R}^n$ whose $h$-vector satisfies
$h_k>h_{k-1}$ for $k\le n/2$, as by Theorem \ref{thm:timorin} this ensures 
the existence of $\alpha$ so that \eqref{eq:timpr} holds and the 
inequality in \eqref{eq:timhr} is strict (by Lemma \ref{lem:pol}, this 
implies the corresponding statement of Theorem \ref{thm:hodge} for some 
$m,K_1,\ldots,K_m,x$).
But such an example is easily identified: e.g., we may choose 
$\Lambda$ to be the unit cube in $\mathbb{R}^n$, whose $h$-vector
is given by $h_k={n\choose k}$ by the computations in \cite[p.\ 
387]{Tim99} (note that $\Lambda=[0,1]\times\cdots\times[0,1]$ and
use the product formula for $H$-polynomials).

\section{Proof of Theorem \ref{thm:main}}
\label{sec:main}

We first consider the special case that $k=2$.

\begin{proof}[Proof of Theorem \ref{thm:main} for $k=2$]
Fix any $n\ge 4$ and let $k=2$. By the second part of Theorem 
\ref{thm:hodge}, we may choose a simple polytope 
$\Lambda=L=C_1=\cdots=C_{n-4}$ in 
$\mathbb{R}^n$, $m\ge 1$, polytopes
$K_1,\ldots,K_m\in\mathcal{P}(\Lambda)$, 
and $x\in\mathbb{R}^m$ so that \eqref{eq:primitive} holds and the 
inequality in \eqref{eq:hr} is strict. In the following, we will denote
$K_{m+1}:=\Lambda$.

Now define the $(m+1)\times (m+1)$ matrix
$$
        \MM_{ij} := \V(
        K_i[2],K_j[2],\Lambda[n-4]),
$$
and let $y=e_{m+1}$. Then \eqref{eq:primitive} with $M=\Lambda$ implies 
$$
	\langle x,\MM y\rangle=0,
$$
while the strict inequality in \eqref{eq:hr} may be written as
$$
	\langle x,\MM x\rangle>0.
$$
Note that $\MM$ is a positive matrix, as all bodies in 
$\mathcal{P}(\Lambda)$ are full-dimensional. Thus $\MM$ is not hyperbolic 
by Lemma \ref{lem:hyp}. In particular, by Lemma \ref{lem:syl}, there 
exists $I\subseteq[m+1]$ so that $(-1)^{|I|}\det \MM_I>0$. The latter 
contradicts Conjecture \ref{conj:fed}.
\end{proof}

Informally, the above proof works as follows. By Lemma \ref{lem:syl}, 
Fedotov's Conjecture \ref{conj:fed} is equivalent to the statement that 
the matrix $\MM$ is hyperbolic. However, when $k=2$, the Hodge-Riemann 
relation \eqref{eq:hr} yields an inequality in the opposite direction from 
the one that holds for hyperbolic matrices by Lemma \ref{lem:hyp}. Thus 
the Hodge-Riemann relation contradicts Fedotov's conjecture.

Precisely the same argument works whenever $k\ge 2$ is even. Curiously, 
however, the argument fails when $k$ is odd, as then \eqref{eq:hr} and 
hyperbolicity yield inequalities in the same direction. To prove Theorem 
\ref{thm:main} for arbitrary $k$, we will use a different argument: rather 
than applying the Hodge-Riemann relation of degree $k$, we will instead 
reduce the problem for any $k>2$ back to the case $k=2$.

\begin{proof}[Proof of Theorem \ref{thm:main} for general $k$]
Fix any $n\ge 6$ and $2<k\le n/2$. Choose $\Lambda$, $m$, 
$K_1,\ldots,K_{m+1}$, $x,y$, and $\MM$ as in the proof of the $k=2$ case. 
Note first that
\begin{align*}
	\MM_{ij} &:=
	\V(K_i[2],K_j[2],\Lambda[n-4]) \\ &\phantom{:}=
	\V(K_i[2],\Lambda[k-2],K_j[2],\Lambda[k-2],
	\Lambda[n-2k])
	\\
	&\phantom{:}=
	\frac{1}{(k!)^2}\sum_{\delta,\varepsilon\in\{0,1\}^k}
	(-1)^{k+\delta_1+\cdots+\delta_k}
	(-1)^{k+\varepsilon_1+\cdots+\varepsilon_k}
	\V(K_{i\delta}[k],
	K_{j\varepsilon}[k],\Lambda[n-2k])
\end{align*}
by the polarization formula \cite[p.\ 137]{BZ88}, where
$$
	K_{i\delta} 
	:=(\delta_1+\delta_2)K_i+(\delta_3+\cdots+\delta_k)\Lambda.
$$
Define the $(m+1)(2^k-1)\times (m+1)(2^k-1)$ positive matrix
$$
	\tilde\MM_{i\delta,j\varepsilon} :=
	\V(K_{i\delta}[k],
        K_{j\varepsilon}[k],\Lambda[n-2k])
$$
for $i,j\in[m+1]$, $\delta,\varepsilon\in 
\{0,1\}^k\backslash(0,\ldots,0)$,
and define $\tilde x,\tilde y\in\mathbb{R}^{(m+1)(2^k-1)}$ as
$$
	\tilde x_{i\delta} = 
	\frac{(-1)^{k+\delta_1+\cdots+\delta_k} x_i}{k!},
	\qquad\quad
	\tilde y_{i\delta} = 1_{i=m+1} 1_{\delta=(1,0,\ldots,0)}.
$$
Then
$$
	\langle \tilde x,\tilde\MM\tilde y\rangle =
	\langle x,\MM y\rangle = 0,
	\qquad\quad
	\langle \tilde x,\tilde\MM\tilde x\rangle = 
	\langle x,\MM x\rangle > 0,
$$
so $\tilde\MM$ cannot be hyperbolic. The latter contradicts Conjecture 
\ref{conj:fed} for the given value of $k$ as in the proof of the case 
$k=2$.
\end{proof}

\section{An explicit example}
\label{sec:explicit}

The proof of Theorem \ref{thm:main} shows that counterexamples to 
Fedotov's conjecture are prevalent: any simple polytope $\Lambda$ whose 
Hodge-Riemann relation of degree $2$ is nontrivial (that is, whose 
$h$-vector satisfies $h_2>h_1$, cf.\ section \ref{sec:hodge}) gives rise 
to a counterexample to Conjecture \ref{conj:fed} with 
$C_1=\cdots=C_{n-2k}=\Lambda$ and some $K_1,\ldots,K_m$ strongly 
isomorphic to $\Lambda$. However, the construction itself is rather 
indirect. The aim of this section is to illustrate the construction by 
means of a simple explicit example in the case that $\Lambda$ is the unit 
cube.

Let $\Lambda=[0,e_1]+\cdots+[0,e_n]$ be the unit cube in 
$\mathbb{R}^n$. Then any $M\in\mathcal{P}(\Lambda)$ is a parallelepiped 
of the form $M=M_a+v$ for some $a_1,\ldots,a_n>0$ and $v\in\mathbb{R}^n$, 
where
$$
	M_a:=a_1[0,e_1]+\cdots+a_n[0,e_n].
$$
By translation-invariance of mixed volumes, it suffices to consider $v=0$. 
We can compute mixed volumes of parallelepipeds using that
$$
	n!\,\V([0,e_{i_1}],\ldots,[0,e_{i_n}]) 
	= 1_{i_1\ne \cdots\ne i_n}
$$
by \cite[(5.77)]{Sch14}, so that by additivity of mixed volumes
$$
	n!\,\V(M_{a^{(1)}},\ldots,M_{a^{(n)}}) =
	\sum_{i_1\ne \cdots\ne i_n}
	a^{(1)}_{i_1}\cdots a^{(n)}_{i_n}.
$$
Using this simple expression, it is not difficult to generate explicit 
examples.

For example, for the case $n=4$, $k=2$, let us define
\begin{align*}
	K_1 &:= [0,e_1]+[0,e_2], \\
	K_2 &:= [0,e_3]+[0,e_4], \\
	K_3 &:= [0,e_1]+\cdots+[0,e_4]=\Lambda.
\end{align*}
Then it is readily verified by means of the above formula that
$$
	3\,\V(K_1[2],M,\Lambda) +
	3\,\V(K_2[2],M,\Lambda) - \V(K_3[2],M,\Lambda) = 0
$$
for all $M\in\mathcal{P}(\Lambda)$, that is, \eqref{eq:primitive} holds 
with $x_1=x_2=3$ and $x_3=-1$. (This is most easily seen by using 
$\Lambda=K_1+K_2$ and $\V(K_1[3],M)=\V(K_2[3],M)=0$ for all 
$M$.) On the other hand, we compute
\begin{multline*}
	\sum_{i,j} x_ix_j\,\V(K_i[2],K_j[2]) =
	18\, \V(K_1[2],K_2[2]) - 6\, \V(K_1[2],K_3[2])  \\
	-6\, \V(K_2[2],K_3[2]) + \V(K_3[2],K_3[2]) = 2,
\end{multline*}
so that \eqref{eq:hr} holds with strict inequality. 
It therefore follows from the argument in the proof of Theorem 
\ref{thm:main} that Conjecture 
\ref{conj:fed} must fail for $n=4$, 
$k=2$, $m=3$ when $K_1,K_2,K_3$ are chosen as above.
The author is indebted to the anonymous referee of this note for 
suggesting this example.

\begin{rem}
Technically speaking the above example does not verify the assumptions of 
Theorem \ref{thm:hodge}, as $K_1,K_2$ have empty interior and are 
therefore not strongly isomorphic to $\Lambda$. However, the example 
remains valid if we replace $K_1,K_2,x_3$ by $K_1'=K_1+\varepsilon K_2$,
$K_2'=K_2+\varepsilon K_1$, and $x_3'=-1-4\varepsilon-\varepsilon^2$
for any $\varepsilon>0$.
\end{rem}

Of course, given any explicit example, one can readily verify directly 
that Conjecture \ref{conj:fed} fails without any reference to the 
Hodge-Riemann relations. However, this obscures the fundamental reason for 
the failure of Fedotov's conjecture which was essential for the discovery 
of such counterexamples. On the other hand, the above explicit example 
provides additional information beyond our main result as stated in 
Theorem \ref{thm:main}: it shows that Fedotov's conjecture fails already 
when $k=2$ and $m=3$, that is, in the smallest case that is not covered by 
Lemma \ref{lem:fedeasy}. The example is readily modified to extend this 
conclusion to any $k$.

\begin{lem}
For every $k\ge 2$ and $n\ge 2k$, Conjecture \ref{conj:fed} fails for
$m=3$.
\end{lem}

\begin{proof}
Define the following bodies:
\begin{align*}
	&K_1 = [0,e_1]+\cdots+[0,e_k], \\
	&K_2 = [0,e_{k+1}]+\cdots+[0,e_{2k}],\\
	&K_3 = [0,e_1]+\cdots+[0,e_{2k}],\\
	&C_1,\ldots,C_{n-2k} = [0,e_{2k+1}]+\cdots+[0,e_n].
\end{align*}
Then we can compute 
$\MM_{ij}:=\V(K_i[k],K_j[k],C_1,\ldots,C_{n-2k})$ explicitly as
$$
	\MM = \begin{bmatrix}
	0 & a & a \\
	a & 0 & a \\
	a & a & b
	\end{bmatrix},
	\qquad
	a=\frac{(k!)^2(n-2k)!}{n!},\qquad b=\frac{(2k)!(n-2k)!}{n!}.
$$
Therefore
$$
	\det\MM = a^2(2a - b) =
	\frac{(k!)^4((n-2k)!)^3}{(n!)^3}
	(2(k!)^2-(2k)!)<0
$$
whenever $k\ge 2$, contradicting Conjecture \ref{conj:fed}.
\end{proof}

\begin{rem}
The explicit expression for $n!\,\V(M_{a^{(1)}},\ldots,M_{a^{(n)}})$ given 
above is nothing other than the permanent of the matrix whose columns are 
$a^{(1)},\ldots,a^{(n)}$. It is well known \cite[\S 25.4]{BZ88} that the 
permanent of a matrix is not only a special case of mixed volumes, but 
also of mixed discriminants (the linear-algebraic analogue of mixed 
volumes). The above example therefore shows that the 
analogue of Fedotov's conjecture for mixed discriminants is also invalid. 
This should not come as a surprise, as mixed discriminants also satisfy 
Hodge-Riemann relations \cite{Tim98} and thus the arguments behind Theorem 
\ref{thm:main} extend to this situation. 
\end{rem}

\section{Hodge-Riemann relations fail for general convex bodies}
\label{sec:hrgeneral}

Beside the disproof of Fedotov's conjecture, an expository aim of this 
note has been to highlight that the Hodge-Riemann relations of McMullen 
and Timorin may be interpreted entirely in terms of familiar objects from 
classical convex geometry: they provide inequalities between mixed volumes 
that generalize the Alexandrov-Fenchel inequality. From the viewpoint of 
classical convexity, however, the formulation of Theorem \ref{thm:hodge} 
exhibits a puzzling aspect. In principle, the statements of the relations
\eqref{eq:primitive} and \eqref{eq:hr} make sense when $K_i,C_i,M,L$ are 
arbitrary convex bodies, but the statement of Theorem \ref{thm:hodge} 
requires these bodies to be strongly isomorphic simple polytopes. It is 
not immediately clear why the latter is important: most classical inequalities 
in convex geometry are either valid for arbitrary convex bodies, or 
involve geometric quantities that do not make sense in the absence of
regularity conditions (such as uniform bounds on the principal curvatures).

We have shown in section \ref{sec:hodge} that the Hodge-Riemann relation 
of degree $k=1$ is equivalent to the Alexandrov-Fenchel inequality for 
strongly isomorphic polytopes. The inequality then extends readily to 
arbitrary convex bodies by approximation. This is possible because for 
$k=1$ the relations \eqref{eq:primitive} and \eqref{eq:hr} can be combined 
into a single inequality by Lemma \ref{lem:hyp}, and this 
\emph{inequality} is preserved by taking limits. However, a natural 
analogue of Lemma \ref{lem:hyp} does not hold for $k\ge 2$. It is 
therefore unclear how to apply an approximation argument, as the 
\emph{equality} \eqref{eq:primitive} need not be stable under 
approximation (that is, if \eqref{eq:primitive} holds for a given 
collection of convex bodies, they might not be approximated by simple 
strongly isomorphic polytopes in such a way that \eqref{eq:primitive} 
remains valid for the approximations).

We will presently show by means of a simple example that the Hodge-Riemann 
relation of degree $k=2$ can in fact fail for general convex bodies.

\begin{example} 
Let $B$ be the 
Euclidean unit ball in $\mathbb{R}^4$, and let $L=\mathrm{conv}\{B,x\}$ 
for some $x\not\in B$, that is, $L$ is a cap body of $B$. It is a 
classical fact, which dates back essentially to Minkowski, that 
\cite[Theorem 7.6.17]{Sch14}
$$
	\V(L,L,B,L) =
	\V(B,L,B,L) = 
	\V(B,B,B,L) >
	\V(B,B,B,B).
$$
In particular, this gives rise to a nontrivial equality case of the 
Alexandrov-Fenchel inequality of Theorem \ref{thm:af} with $n=4$, 
$K=C_1=B$, $C_2=L$. The latter implies
$$
	\V(M,B,B,L) = \V(M,L,B,L)
$$
for all convex bodies $M$, cf.\ 
\cite[Theorem 7.4.3]{Sch14} or \cite[Lemma 3.12]{SvH20}.

We will now use these observations to construct a counterexample to the 
Hodge-Riemann relation of degree $k=2$ for general convex bodies. Define
$$
	K_1:=B,\qquad\quad K_2:=L,\qquad\quad K_3:=B+L.
$$
Then
\begin{multline*}
	3\,\V(K_1[2],M,L) + 
	\V(K_2[2],M,L) -
	\V(K_3[2],M,L) = \\
	2\,\V(M,B,B,L) - 2\,\V(M,L,B,L) = 0
\end{multline*}
for all convex bodies $M$; that is, \eqref{eq:primitive} is satisfied
with $x_1=3$, $x_2=1$, and $x_3=-1$.
On the other hand, we can compute
$$
	\sum_{i,j} x_ix_j\,\V(K_i[2],K_j[2]) =
	4\, \V(B,B,B,B) - 4\, \V(L,B,B,B) < 0,
$$
contradicting the validity of \eqref{eq:hr}.
\end{example}

\begin{rem}
There is nothing special about the particular choice of the Euclidean
ball in this example: the conclusion remains valid when $B$ is replaced
by an arbitrary convex body $K$ and $L$ is a cap body of $K$ as defined in 
\cite[p.\ 87]{Sch14}. For example, we may take $L$ to be the unit cube in 
$\mathbb{R}^4$ and $K$ to be the same cube with one of its corners sliced 
off. The latter variant of the example shows that the Hodge-Riemann 
relations can fail for polytopes that are not strongly isomorphic.
\end{rem}

The above example suggests that the validity of Hodge-Riemann relations of 
degree $k\ge 2$ is related to the study of the equality cases of the 
Alexandrov-Fenchel inequality: indeed, the assumption \eqref{eq:primitive} 
is reminiscent of the equality condition of the Alexandrov-Fenchel 
inequality (cf. \cite[Theorem 7.4.2]{Sch14}), which is precisely what was 
used to construct the above counterexample. Even though the 
Alexandrov-Fenchel inequality is stable under approximation, this cannot 
be used to study its nontrivial equality cases as the latter are destroyed 
by approximation \cite{Sch85,SvH19,SvH20}. The above example shows that 
for Hodge-Riemann relations of degree $k\ge 2$, this instability is 
manifested even by the inequality itself.

On the other hand, it is expected that the validity of Hodge-Riemann 
relations should extend to ``ample'' families of convex bodies other than 
simple strongly isomorphic polytopes. In particular, one may conjecture
that the statement of Theorem \ref{thm:hodge} remains valid if the class 
$\mathcal{P}(\Lambda)$ is replaced by the class $C^\infty_+$ of convex 
bodies whose boundaries are smooth and have strictly positive curvature. 
Some initial progress in this direction may be found in the recent papers
\cite{Kot20,Ale20,KW22}.

\subsection*{Acknowledgments}

This work was supported in part by NSF grants DMS-1811735 and DMS-2054565, 
and by the Simons Collaboration on Algorithms \& Geometry. The author is 
grateful to Jan Kotrbat\'y for bringing Fedotov's conjecture to his 
attention, and to the anonymous referee for very helpful comments on the 
first version of this note and for suggesting the explicit example of 
section \ref{sec:explicit}.

\bibliographystyle{abbrv}
\bibliography{ref}

\begin{thebibliography}{10}

\bibitem{Ale20}
S.~Alesker.
\newblock Kotrbaty's theorem on valuations and geometric inequalities for
  convex bodies, 2020.
\newblock Preprint arxiv:2010.01859.

\bibitem{Ale37}
A.~D. Alexandrov.
\newblock Zur {T}heorie der gemischten {V}olumina von konvexen {K}\"orpern
  {II}.
\newblock {\em Mat. Sbornik N.S.}, 2:1205--1238, 1937.

\bibitem{Ale38}
A.~D. Alexandrov.
\newblock Zur {T}heorie der gemischten {V}olumina von konvexen {K}\"orpern
  {IV}.
\newblock {\em Mat. Sbornik N.S.}, 3:227--251, 1938.

\bibitem{BR97}
R.~B. Bapat and T.~E.~S. Raghavan.
\newblock {\em Nonnegative matrices and applications}.
\newblock Cambridge University Press, Cambridge, 1997.

\bibitem{BZ88}
Y.~D. Burago and V.~A. Zalgaller.
\newblock {\em Geometric inequalities}.
\newblock Springer-Verlag, Berlin, 1988.

\bibitem{DN06}
T.-C. Dinh and V.-A. Nguy\^{e}n.
\newblock The mixed {H}odge-{R}iemann bilinear relations for compact
  {K}\"{a}hler manifolds.
\newblock {\em Geom. Funct. Anal.}, 16(4):838--849, 2006.

\bibitem{Fed79}
V.~P. Fedotov.
\newblock A new method for the proof of the inequalities between mixed volumes
  and the generalization of {A}leksandrov-{F}enchel-{S}hephard inequalities.
\newblock {\em Dokl. Akad. Nauk SSSR}, 245(1):31--34, 1979.

\bibitem{Huh18}
J.~Huh.
\newblock Combinatorial applications of the {H}odge-{R}iemann relations.
\newblock In {\em Proceedings of the {I}nternational {C}ongress of
  {M}athematicians---{R}io de {J}aneiro 2018. {V}ol. {IV}. {I}nvited lectures},
  pages 3093--3111. World Sci. Publ., Hackensack, NJ, 2018.

\bibitem{Kot20}
J.~Kotrbat\'y.
\newblock On {H}odge-{R}iemann relations for translation-invariant valuations.
\newblock {\em Adv. Math.}, 2021.
\newblock To appear.

\bibitem{KW22}
J.~Kotrbat\'y and T.~Wannerer.
\newblock From harmonic analysis of translation-invariant valuations to
  geometric inequalities for convex bodies, 2022.
\newblock Preprint arxiv:2202.10116.

\bibitem{McM93}
P.~McMullen.
\newblock On simple polytopes.
\newblock {\em Invent. Math.}, 113(2):419--444, 1993.

\bibitem{Min03}
H.~Minkowski.
\newblock Volumen und {O}berfl\"{a}che.
\newblock {\em Math. Ann.}, 57(4):447--495, 1903.

\bibitem{Sch85}
R.~Schneider.
\newblock On the {A}leksandrov-{F}enchel inequality.
\newblock In {\em Discrete geometry and convexity}, volume 440 of {\em Ann. New
  York Acad. Sci.}, pages 132--141. New York Acad. Sci., 1985.

\bibitem{Sch14}
R.~Schneider.
\newblock {\em Convex bodies: the {B}runn-{M}inkowski theory}.
\newblock Cambridge University Press, expanded edition, 2014.

\bibitem{SvH18}
Y.~Shenfeld and R.~van Handel.
\newblock Mixed volumes and the {B}ochner method.
\newblock {\em Proc. Amer. Math. Soc.}, 147(12):5385--5402, 2019.

\bibitem{SvH19}
Y.~Shenfeld and R.~{Van Handel}.
\newblock The extremals of {M}inkowski's quadratic inequality.
\newblock {\em Duke Math. J.}, 171:957--1027, 2021.

\bibitem{SvH20}
Y.~Shenfeld and R.~{Van Handel}.
\newblock The extremals of the {A}lexandrov-{F}enchel inequality for convex
  polytopes.
\newblock {\em Acta Math.}, 2022.
\newblock To appear.

\bibitem{She60}
G.~C. Shephard.
\newblock Inequalities between mixed volumes of convex sets.
\newblock {\em Mathematika}, 7:125--138, 1960.

\bibitem{Tim98}
V.~A. Timorin.
\newblock The mixed {H}odge-{R}iemann bilinear relations in the linear
  situation.
\newblock {\em Funct. Anal. Appl.}, 32(4):268--272, 1998.

\bibitem{Tim99}
V.~A. Timorin.
\newblock An analogue of the {H}odge-{R}iemann relations for simple convex
  polytopes.
\newblock {\em Russ. Math. Surv.}, 54(2):381--426, 1999.

\end{thebibliography}

\end{document}